\def\disp{\displaystyle}
\def\dref#1{(\ref{#1})}
\theoremstyle{plain}
\newtheorem{theorem}{Theorem}[section]
\newtheorem{lemma}{Lemma}[section]
\theoremstyle{definition}
\newtheorem{definition}{Definition}[section]
\newtheorem{remark}{Remark}[section]
\numberwithin{equation}{section}
\begin{document}

\title{\bf Global weak solutions in a three-dimensional Keller-Segel-Navier-Stokes system modeling coral fertilization
}

\author{
Jiashan Zheng\thanks{Corresponding author.   E-mail address:
 zhengjiashan2008@163.com (J.Zheng)}
 \\
    School of Mathematics and Statistics Science,\\
     Ludong University, Yantai 264025,  P.R.China \\
}
\date{}


\maketitle \vspace{0.3cm}
\noindent
\begin{abstract}
We consider an initial-boundary value problem for the incompressible four-component Keller-Segel-Navier-Stokes system  with  rotational flux
$$
 \left\{
 \begin{array}{l}
   n_t+u\cdot\nabla n=\Delta n-\nabla\cdot(nS(x,n,c)\nabla c)-nm,\quad
x\in \Omega, t>0,\\
    c_t+u\cdot\nabla c=\Delta c-c+m,\quad
x\in \Omega, t>0,\\
 m_t+u\cdot\nabla m=\Delta m-nm,\quad
x\in \Omega, t>0,\\
u_t+\kappa(u \cdot \nabla)u+\nabla P=\Delta u+(n+m)\nabla \phi,\quad
x\in \Omega, t>0,\\
\nabla\cdot u=0,\quad
x\in \Omega, t>0\\
 \end{array}\right.\eqno(1.1)
 $$
 in a bounded domain $\Omega\subset \mathbb{R}^3$ with smooth boundary, where $\kappa\in \mathbb{R}$ is given
constant, $S$ is a matrix-valued sensitivity satisfying $|S(x,n,c)|\leq C_S(1+n)^{-\alpha}$ with some
$C_S> 0$ and $\alpha\geq 0$.   
As the case $\kappa = 0$ (with $\alpha\geq\frac{1}{3}$ or the initial data satisfy a certain smallness condition)
has been considered in \cite{Lidfff00}, based on new gradient-like functional
 inequality, it is shown in the present
paper that the corresponding initial-boundary problem with $\kappa \neq 0$ admits at least one global weak solution
if $\alpha>0$.  To the best of our knowledge, this is the first analytical work for the {\bf full  three-dimensional four-component} chemotaxis-Navier-Stokes system.

\end{abstract}

\vspace{0.3cm}
\noindent {\bf\em Key words:}~
Navier-Stokes system; Keller-Segel model; 
Global existence; 
Tensor-valued
sensitivity

\noindent {\bf\em 2010 Mathematics Subject Classification}:~ 35K55, 35Q92, 35Q35, 92C17

\newpage
\section{Introduction}

Many phenomena, which appear in natural science, especially, biology and
physics, support animals' lives (see \cite{Liggh00,Xu5566r793,Wangsseeess21215,Guggg1215}). Chemotaxis has been
extensively studied in the context of modeling mold and bacterial colonies (see Hillen and Painter \cite{Hillen} and Bellomo et al. \cite{Bellomo1216}).
 In order to describe this biological phenomenon in mathematics, in 1970, Keller and Segel (\cite{Keller2710})
proposed the following system
\begin{equation}
 \left\{\begin{array}{ll}
 n_t=\Delta n-\chi\nabla\cdot( n\nabla c),\\
 \disp{ c_t=\Delta c- c +n,}
 \end{array}\right.\label{722dff344101.2xddff16677}
\end{equation}
which is called Keller-Segel system. Here $\chi > 0$ is called chemotactic sensitivity, $n$ and $c$ denote the density of the cell population and the
concentration of the attracting chemical substance, respectively.
 Since then, there has been an enormous amount of effort devoted to the possible blow up and
regularity of solutions, as well as the asymptotic behavior and other properties (see e.g. \cite{Bellomo1216}). We refer to \cite{Hillen,Horstmann2710} and \cite{Perthame793} for the further
reading. Beyond this,
 a large number of variants of system \dref{722dff344101.2xddff16677} have been investigated, including the system with the logistic terms (see \cite{Cao,Tello710,Winkler21215,Zhengssdddssddddkkllssssssssdefr23},
for instance) and  the nonlinear diffusion (\cite{Tao794,Winkler72,Zhengssdefr23,Zheng00,Zheng33312186,Zhengsddfffsdddssddddkkllssssssssdefr23}), the signal is  consumed by the cells (see e.g.
Tao and  Winkler \cite{Tao71215},    \cite{Zhengssssssdefr23})
 two-species chemotaxis system (see \cite{LiLiLingssssssdefr23,Zhenhhhhgssssssssdefr23}, for instance)  and so on.

In order to
discuss of the coral fertilization,
Kiselev and Ryzhik (\cite{Kiselevdd793} and \cite{Kiselevsssdd793}) investigated the important effect of
chemotaxis on the coral fertilization process via the Keller-Segel type system of the form
\begin{equation}
 \left\{\begin{array}{ll}
 \rho_t+u\cdot\nabla\rho=\Delta \rho-\chi\nabla\cdot( \rho\nabla c)-\rho^q,
 \quad
\\
 \disp{ 0=\Delta c +\rho,}\quad\\
 \end{array}\right.\label{722dff344101.dddddgghggghhff2ffggffggx16677}
\end{equation}
where $\rho$ is the density of egg (sperm) gametes, $u$ is the smooth divergence free sea fluid
velocity and
 $c$ denotes the concentration of  chemical signal which is released  by the eggs.  This model \dref{722dff344101.dddddgghggghhff2ffggffggx16677} implicitly assumes that the
densities of sperm and egg gametes are identical.
Kiselev and Ryzhik (\cite{Kiselevdd793} and \cite{Kiselevsssdd793})
proved  that if $q > 2$ and  the chemotactic sensitivity $\chi$ increases, for the associated Cauchy problem of  \dref{722dff344101.dddddgghggghhff2ffggffggx16677},
the total mass
$\int_{\mathbb{R}^2} \rho$ can become arbitrarily
small,  whereas if $q = 2$, a corresponding weaker
but yet relevant effect within finite time intervals is detected (see Kiselev and Ryzhik \cite{Kiselevsssdd793}).
%
%
%
%
%
%
%
%

In various situations, however, the interaction of chemotactic movement of the gametes and the surrounding fluid is not negligible (see   Espejo and Suzuki
\cite{Espejoss12186}, Espejo and Winkler  \cite{EspejojjEspejojainidd793}). To model such biological processes, Espejo and Suzuki
(\cite{Espejoss12186}) proposed the following model
\begin{equation}
 \left\{\begin{array}{ll}
 \rho_t+u\cdot\nabla\rho=\Delta \rho-\chi\nabla\cdot( \rho\nabla c)-\mu\rho^2,
 \quad
\\
c_t+u\cdot\nabla c=\Delta c-c+\rho,
 \quad
\\
u_t+\kappa (u\cdot\nabla)u=\Delta u-\nabla P+\rho\nabla\rho,
 \quad
\\
 \disp{ \nabla\cdot u=0,}\quad\\
 \end{array}\right.\label{722dff344ddd101.ddgghggghhff2ffggffggx16677}
\end{equation}
where 
$\rho$ and $c$ are defined as before. 
 Here $u,P,\phi$ and $\kappa\in \mathbb{R}$ denote, respectively, the velocity field, the associated pressure of the fluid, the potential of the
 gravitational field and the
strength of nonlinear fluid convection.


Recently,
in order  to analyze a further refinement of the model \dref{722dff344ddd101.ddgghggghhff2ffggffggx16677} which
explicitly distinguishes between sperms and eggs,   Espejo and Winkler (\cite{EspejojjEspejojainidd793})
 proposed the following  four-component Keller-Segel(-Navier)-Stokes system  with  (rotational flux):
\begin{equation}
 \left\{\begin{array}{ll}
   n_t+u\cdot\nabla n=\Delta n-\nabla\cdot(nS(x, n, c)\cdot\nabla c)-nm,\quad
x\in \Omega, t>0,\\
    c_t+u\cdot\nabla c=\Delta c-c+m,\quad
x\in \Omega, t>0,\\
 m_t+u\cdot\nabla m=\Delta m-nm,\quad
x\in \Omega, t>0,\\
u_t+\kappa(u \cdot \nabla)u+\nabla P=\Delta u+(n+m)\nabla \phi,\quad
x\in \Omega, t>0,\\
\nabla\cdot u=0,\quad
x\in \Omega, t>0,\\
\disp{(\nabla n-nS(x, n, c))\cdot\nu=\nabla c\cdot\nu=\nabla m\cdot\nu=0,u=0,}\quad
x\in \partial\Omega, t>0,\\
\disp{n(x,0)=n_0(x),c(x,0)=c_0(x),m(x,0)=m_0(x),u(x,0)=u_0(x),}\quad
x\in \Omega\\
 \end{array}\right.\label{1.1}
\end{equation}
in a domain $\Omega\subset  \mathbb{R}^N(N=2)$, where $u,P,\phi$, $\kappa\in \mathbb{R}$ and $c$ are defined as before and
$S$ is a tensor-valued function or a scalar function which satisfies
\begin{equation}\label{x1.73142vghf48rtgyhu}
S\in C^2(\bar{\Omega}\times[0,\infty)^2;\mathbb{R}^{3\times3})
 \end{equation}
 and
 \begin{equation}\label{x1.73142vghf48gg}|S(x, n, c)|\leq C_S(1 + n)^{-\alpha} ~~~~\mbox{for all}~~ (x, n, c)\in\Omega\times [0,\infty)^2
 \end{equation}
with some $C_S > 0$ and $\alpha> 0$.
 Here  the scalar functions $n = n(x, t)$ and $m = m(x, t)$  denote the population densities of unfertilized sperms and
eggs, respectively. In \cite{EspejojjEspejojainidd793}, assuming that $S(x, n, c)\equiv1$, Espejo and Winkler  showed that the 2D  four-component Keller-Segel-Navier-Stokes system \dref{1.1}
 possesses at least one bounded classical  solution,
whereas,
in three dimensions, Li, Pang and  Wang (\cite{Lidfff00}) showed that the four-component Keller-Segel-Stokes ($\kappa=0$ in the first equation of \dref{1.1}) system \dref{1.1}
with  tensor-valued function (where the tensor-valued function   $S$  satisfies \dref{x1.73142vghf48gg} with $\alpha\geq\frac{1}{3}$) possesses at least one bounded classical solution. Recently, by using a (new) weighted estimate,
Zheng (\cite{Zhengssddfffssdddssdddddkkkkfssdddd00}) proved that if $S$  satisfies \dref{x1.73142vghf48gg} with $\alpha>0,$ the four-component Keller-Segel-Stokes system \dref{1.1} admits at least one bounded classical solution.
These indeed extend and improve the recent corresponding results obtained by Li, Pang and  Wang (\cite{Lidfff00}).
%
However,
as far as we know, for the full three-dimensional four-component chemotaxis-Navier-Stokes system \dref{1.1} ($\kappa\not=0$ in \dref{1.1})
it is still not clearly 
whether the solution of the system \dref{1.1} is exists or not.
Moreover, in \cite{EspejojjEspejojainidd793}, \cite{Lidfff00} and \cite{Zhengssddfffssdddssdddddkkkkfssdddd00}, the authors also showed that the corresponding
solutions converge to a spatially homogeneous equilibrium exponentially as $t \rightarrow\infty$ as well.

Motivated by the above works, the main objective of the paper is to  investigate the four-component Keller-Segel-Navier-Stokes system  \dref{1.1}
with  rotational flux.
 We sketch here the main ideas and methods used in this article.
A key role in our existence analysis is played by the observation that for
appropriate positive constants $a_i$ and $b_i(i=1,2)$, the functional
$$
\left\{
\begin{array}{rl}
\disp{\int_{\Omega} n_{\varepsilon}^{4\alpha+\frac{2}{3}}+a_1\int_{\Omega}   |\nabla c_{\varepsilon}|^2+b_1\int_{\Omega}  | {u_{\varepsilon}}|^2~~~\mbox{if}~~\alpha\neq\frac{1}{12},}\\
\disp{\int_{\Omega} n_{\varepsilon}\ln n_{\varepsilon}+a_2\int_{\Omega}   |\nabla c_{\varepsilon}|^2+b_2\int_{\Omega}  | {u_{\varepsilon}}|^2~~~~\mbox{if}~~\alpha=\frac{1}{12}}
\end{array}
\right.
$$
possesses a favorable entropy-like property, where  $n_{\varepsilon},c_{\varepsilon}$ and $u_{\varepsilon}$ are components of the solutions to  \dref{1.1}.
This will entail a series of a
priori estimates which will  derive further $\varepsilon$-independent bounds for spatio-temporal integrals of the approximated solutions and several $\varepsilon$-independent regularity features of their time derivatives (see Section 4-5). On the basis of the compactness properties thereby implied,  we shall finally
pass to the limit along an adequate sequence of numbers $\varepsilon = \varepsilon_j\searrow0$ and thereby verify the main results (see Section 6).

Before going into our mathematical analysis, we recall some important progresses on system \dref{1.1} and
its variants.
In order to  describe the behavior of bacteria of the species Bacillus
subtilis suspended in sessile water drops, Tuval et al. (\cite{Tuval1215}) proposed
  the
following chemotaxis--fluid model
\begin{equation}
 \left\{\begin{array}{ll}
   n_t+u\cdot\nabla n=\Delta n-\nabla\cdot( nS(x,n,c)\nabla c),\quad
x\in \Omega, t>0,\\
    c_t+u\cdot\nabla c=\Delta c-nf(c),\quad
x\in \Omega, t>0,\\
u_t+\kappa (u\cdot\nabla)u+\nabla P=\Delta u+n\nabla \phi,\quad
x\in \Omega, t>0,\\
\nabla\cdot u=0,\quad
x\in \Omega, t>0,\\
 \end{array}\right.\label{1.1hhjffggjddssggtyy}
\end{equation}
where $f(c)$ is the {\bf consumption} rate of the oxygen by the cells.
%
%
%
 The model \dref{1.1hhjffggjddssggtyy} occurs in the modelling of the collective behaviour of chemotaxis-driven swimming aerobic bacteria.
%
%
%
%

If the chemotactic sensitivity $S(x, n, c):=S(c)$ is  a scalar function, by making use of energy-type functionals, some
local and global solvability of corresponding initial value problem for \dref{1.1hhjffggjddssggtyy} in either bounded or unbounded domains have
been obtained in the past years (see e.g.
Chae et. al. \cite{Chaexdd12176},
Duan et. al. \cite{Duan12186},
Liu and Lorz  \cite{Liu1215,Lorz1215},
 Tao and Winkler   \cite{Tao41215,Winkler31215,Winkler61215,Winkler51215}, Zhang and Zheng \cite{Zhang12176},  Zheng \cite{Zhengssddfffssdddssdddddkkkkfssdddd00} and references therein).

 As pointed out by Xue and Othmer  in  \cite{Xusddeddff345511215}, the chemotactic sensitivity $S$ should be a tensor function rather than a scalar one, so that,
 the corresponding chemotaxis-fluid system \dref{1.1hhjffggjddssggtyy} loses some energy-like structure, which plays a key role in the analysis of the scalar-valued case. Therefore, there are only a few works concerning chemotaxis-fluid coupled models with tensor-valued sensitivity (see  Ishida \cite{Ishida1215},  Wang et al. \cite{He76,Wang11215,Wang21215}, Winkler \cite{Winkler11215} and
 Zheng \cite{Zhengssddfffssdddssdddddkkkkfssdddd00} for  example).

 In comparison to \dref{1.1hhjffggjddssggtyy}, if  we assume that
  the signal is produced other than consumed by cells, then the corresponding
  chemotaxis-fluid model is the  Keller-Segel-fluid system
of the form (see \cite{Winkler444ssdff51215,Wang23421215,Wang21215,Wangss21215,Zhenddddgssddsddfff00,Kegssddsddfff00})
\begin{equation}
 \left\{\begin{array}{ll}
   n_t+u\cdot\nabla n=\Delta n-\nabla\cdot( n S(x,n,c)\cdot\nabla c),\quad
x\in \Omega, t>0,\\
    c_t+u\cdot\nabla c=\Delta c-c+n,\quad
x\in \Omega, t>0,\\
u_t+\kappa (u\cdot\nabla)u+\nabla P=\Delta u+n\nabla \phi,\quad
x\in \Omega, t>0,\\
\nabla\cdot u=0,\quad
x\in \Omega, t>0. \\
 \end{array}\right.\label{1sdfdffgggggsxdcfffggvgb.1}
\end{equation}
Over the past few years, the mathematical analysis of \dref{1sdfdffgggggsxdcfffggvgb.1} (with tensor-valued
sensitivity) began to flourish (see \cite{Winkler444ssdff51215,Wang23421215,Wang21215,Wangss21215,Zhenddddgssddsddfff00,Kegssddsddfff00} and references therein). In fact, if the domain $\Omega$ is
further assumed to be convex,
Wang, Xiang and Winkler (\cite{Wang23421215})
%
 established the global existence and boundedness of
the 2D system \dref{1sdfdffgggggsxdcfffggvgb.1} under the assumption of \dref{x1.73142vghf48gg} with $\alpha >0$.
Recently, Zheng (\cite{Zhengssdddssdddddkkkkfssdddd00})   extends   the results of   \cite{Wang23421215} to the  general bounded domain by  some new
entropy-energy estimates. Furthermore, if $S(x, n, c)$ satisfying \dref{x1.73142vghf48rtgyhu} and \dref{x1.73142vghf48gg} with $\alpha > \frac{1}{2}$, Wang and Xiang (\cite{Wangss21215}) proved the same result for
for the
three-dimensional Stokes version ($\kappa=0$ in the first equation of \dref{1.1}) of system \dref{1.1}.
Wang and Liu (\cite{LiuZhLiuLiuandddgddff4556}) showed
 that 3D Keller-Segel-Navier-Stokes ($\kappa\neq0$ in the first equation of \dref{1.1})  system \dref{1.1} admits a global weak solutions for
 tensor-valued sensitivity
$S(x, n, c)$ satisfying \dref{x1.73142vghf48rtgyhu}
 and \dref{x1.73142vghf48gg} with $\alpha > \frac{3}{7}$. More
recently, Ke and Zheng (\cite{Kegssddsddfff00})
 extends the result of \cite{LiuZhLiuLiuandddgddff4556} to the case
$\alpha > \frac{1}{3}$,
which in light of the known
results for the fluid-free system mentioned above is an optimal restriction on $\alpha$.
Some other  results on global existence and boundedness properties have also been obtained for the variant of
\dref{1sdfdffgggggsxdcfffggvgb.1} obtained on replacing $\Delta n$ by nonlinear diffusion operators generalizing the porous medium-type
choice $\Delta n^m$ for several ranges of $m > 1$ (\cite{Zhenddddgssddsddfff00,Peng55667,Li33223321215,Zhengsddfffsdddssddddkkllssssssssdefr23}).



%
%
%



In order to formulate our main result, we will first briefly introduce the technique framework:
 The initial
data are assumed to be
\begin{equation}\label{ccvvx1.731426677gg}
\left\{
\begin{array}{ll}
\displaystyle{n_0\in C(\bar{\Omega})~~~~ \mbox{with}~~ n_0\geq0 ~~\mbox{and}~~n_0\not\equiv0},\\
\displaystyle{c_0\in W^{1,\infty}(\Omega)~~\mbox{with}~~c_0\geq0~~\mbox{in}~~\bar{\Omega},}\\
\displaystyle{m_0\in C(\bar{\Omega})~~~~ \mbox{with}~~ m_0\geq0 ~~\mbox{and}~~m_0\not\equiv0},\\
\displaystyle{u_0\in D(A^\gamma_{r})~~\mbox{for~~ some}~~\gamma\in ( \frac{3}{4}, 1)~~\mbox{and any}~~ {r}\in (1,\infty),}\\
\end{array}
\right.
\end{equation}
where $A_{r}$ denotes the Stokes operator with domain $D(A_{r}) := W^{2,{r}}(\Omega)\cap  W^{1,{r}}_0(\Omega)
\cap L^{r}_{\sigma}(\Omega)$,
and
$L^{r}_{\sigma}(\Omega) := \{\varphi\in  L^{r}(\Omega)|\nabla\cdot\varphi = 0\}$ for ${r}\in(1,\infty)$
 (\cite{Sohr}).
 Apart from this, we shall merely suppose that
 \begin{equation}
\phi\in W^{2,\infty}(\Omega).
\label{dd1.1fghyuisdakkkllljjjkk}
\end{equation}

Under these assumptions, our main result can be read as

\begin{theorem}\label{theorem3}
Let  $\Omega\subset \mathbb{R}^3$ be a bounded    domain with smooth boundary,
 \dref{dd1.1fghyuisdakkkllljjjkk} and \dref{ccvvx1.731426677gg}
 hold, and suppose that $S$ satisfies \dref{x1.73142vghf48rtgyhu} and \dref{x1.73142vghf48gg}
with some
\begin{equation}\label{x1.73142vghf48}\alpha>0.
\end{equation}
Then 
 the problem \dref{1.1} possesses at least
one global weak solution $(n, c, u, P)$
 in the sense of Definition \ref{df1}. 
\end{theorem}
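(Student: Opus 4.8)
The plan is to obtain the solution as a limit of global smooth solutions to regularized problems, the decisive ingredient being a family of $\varepsilon$-independent a priori estimates that must survive both the non-gradient (rotational) structure of the flux $nS(x,n,c)\nabla c$ and the inertial term $\kappa(u\cdot\nabla)u$. Accordingly, for $\varepsilon\in(0,1)$ I would consider an approximate version of \dref{1.1} in which the sensitivity is replaced by $S_\varepsilon(x,n,c):=\rho_\varepsilon(x)S(x,n,c)$ with a cut-off $\rho_\varepsilon\in C_0^\infty(\Omega)$, $0\le\rho_\varepsilon\le 1$, $\rho_\varepsilon\nearrow 1$ — so that the no-flux condition in \dref{1.1} degenerates to homogeneous Neumann data near $\partial\Omega$ — the zeroth-order terms are truncated (for instance $n_\varepsilon\mapsto n_\varepsilon/(1+\varepsilon n_\varepsilon)$), and the transport terms carry the Yosida approximation $(1+\varepsilon A)^{-1}u_\varepsilon$ in place of $u_\varepsilon$. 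Standard fixed-point arguments combined with parabolic and Stokes regularity theory then yield, for every $\varepsilon$, a global classical solution $(n_\varepsilon,c_\varepsilon,m_\varepsilon,u_\varepsilon,P_\varepsilon)$ with $n_\varepsilon,c_\varepsilon,m_\varepsilon\ge 0$, together with the elementary, $\varepsilon$-uniform bounds $\int_\Omega n_\varepsilon(\cdot,t)\le\int_\Omega n_0$ and $\int_\Omega m_\varepsilon(\cdot,t)\le\int_\Omega m_0$ (both nonincreasing), $0\le m_\varepsilon\le\|m_0\|_{L^\infty(\Omega)}$, $\int_0^\infty\!\!\int_\Omega n_\varepsilon m_\varepsilon<\infty$, $\int_0^\infty\!\!\int_\Omega|\nabla m_\varepsilon|^2<\infty$, and — since $c_t=\Delta c-c+m$ has a bounded source — $\|c_\varepsilon(\cdot,t)\|_{L^\infty(\Omega)}\le\max\{\|c_0\|_{L^\infty(\Omega)},\|m_0\|_{L^\infty(\Omega)}\}$.

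The core of the argument is the quasi-dissipative behaviour of the coupled functional highlighted in the introduction. I would test the first equation of the approximate system with the appropriate weight — with $n_\varepsilon^{4\alpha-1/3}$ when $\alpha\ne\frac1{12}$, so that the relevant integral quantity is $\int_\Omega n_\varepsilon^{\,p}$ with $p:=4\alpha+\frac23$, and with $1+\ln n_\varepsilon$ when $\alpha=\frac1{12}$ — test the second equation with $-\Delta c_\varepsilon$, and test the Navier--Stokes equation with $u_\varepsilon$, where the inertial term disappears from the resulting $L^2$-identity because $\nabla\cdot u_\varepsilon=0$ (this is precisely why the value of $\kappa$ plays no role in the energy estimate). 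Adding these three identities against suitably small weights $a_i,b_i>0$, the aim is a differential inequality of the form
\begin{equation*}
\frac{d}{dt}\,\mathcal F_\varepsilon(t)+\frac1C\Big(\big\|\nabla n_\varepsilon^{\,p/2}\big\|_{L^2(\Omega)}^2+\big\|\Delta c_\varepsilon\big\|_{L^2(\Omega)}^2+\big\|\nabla u_\varepsilon\big\|_{L^2(\Omega)}^2\Big)\le C\qquad\text{on }(0,\infty),
\end{equation*}
with $C$ independent of $\varepsilon$. Here the chemotactic contribution is controlled, via $|S_\varepsilon|\le C_S(1+n_\varepsilon)^{-\alpha}$, by $\int_\Omega n_\varepsilon^{\,p}|S_\varepsilon|^2|\nabla c_\varepsilon|^2\lesssim\int_\Omega n_\varepsilon^{\,2\alpha+2/3}|\nabla c_\varepsilon|^2$, which in turn is absorbed by combining a three-dimensional Gagliardo--Nirenberg interpolation of $n_\varepsilon^{p/2}$ between the gradient term above and the zeroth-order control of $\int_\Omega n_\varepsilon^{\,p}$ coming from mass conservation (through Jensen's inequality when $p\le1$) with the elliptic estimate $\|\nabla c_\varepsilon\|_{L^6(\Omega)}\lesssim\|\Delta c_\varepsilon\|_{L^2(\Omega)}$; it is exactly this book-keeping that singles out the exponent $p=4\alpha+\frac23$ and makes the mere assumption $\alpha>0$ sufficient. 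The fluid coupling is handled by $\int_\Omega(n_\varepsilon+m_\varepsilon)\nabla\phi\cdot u_\varepsilon\lesssim\|n_\varepsilon+m_\varepsilon\|_{L^{6/5}(\Omega)}\|\nabla u_\varepsilon\|_{L^2(\Omega)}$ and $\int_\Omega(u_\varepsilon\cdot\nabla c_\varepsilon)\Delta c_\varepsilon\lesssim\|u_\varepsilon\|_{L^6(\Omega)}\|\nabla c_\varepsilon\|_{L^3(\Omega)}\|\Delta c_\varepsilon\|_{L^2(\Omega)}$, both split off against the dissipation with small constants.

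I expect the genuine obstacle to lie in closing this estimate in three dimensions: making the Gagliardo--Nirenberg exponents fit so that every cross-term leaves a strictly subquadratic power of the dissipation, choosing the weights $a_i,b_i$ so that nothing remains uncontrolled, and — the point where the rotational flux really bites — dominating the boundary integrals $\int_{\partial\Omega}n_\varepsilon^{\,p-1}(\cdots)$ that appear upon integrating by parts (since $\nabla n_\varepsilon\cdot\nu$ need not vanish) by $\varepsilon$-uniform trace--interpolation inequalities that exploit the vanishing of $\rho_\varepsilon$ near $\partial\Omega$. Once the functional inequality is available, a time integration provides $\varepsilon$-independent bounds: $n_\varepsilon$ in $L^\infty((0,\infty);L^p(\Omega))$ (respectively in $L^\infty_{\mathrm{loc}}$ of $L\log L$ when $\alpha=\frac1{12}$), $n_\varepsilon^{p/2}$ in $L^2_{\mathrm{loc}}([0,\infty);W^{1,2}(\Omega))$, $c_\varepsilon$ in $L^\infty((0,\infty);W^{1,2}(\Omega))\cap L^2_{\mathrm{loc}}([0,\infty);W^{2,2}(\Omega))$, and $u_\varepsilon$ in $L^\infty((0,\infty);L^2_\sigma(\Omega))\cap L^2_{\mathrm{loc}}([0,\infty);W^{1,2}(\Omega))$; a short bootstrap using these together with smoothing estimates for the heat and Stokes semigroups then upgrades $n_\varepsilon$ to a bound in $L^q_{\mathrm{loc}}(\bar\Omega\times[0,\infty))$ for some $q>1$.

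Finally, these bounds render $n_\varepsilon S_\varepsilon\nabla c_\varepsilon$, $n_\varepsilon m_\varepsilon$, $(n_\varepsilon+m_\varepsilon)\nabla\phi$, $u_\varepsilon\otimes u_\varepsilon$, $u_\varepsilon c_\varepsilon$ and $u_\varepsilon n_\varepsilon$ equi-integrable over bounded space-time cylinders, and reading the equations backwards supplies $\varepsilon$-uniform bounds for $\partial_t n_\varepsilon$, $\partial_t c_\varepsilon$, $\partial_t m_\varepsilon$ and $\partial_t u_\varepsilon$ in suitable negative-order Sobolev spaces. An Aubin--Lions argument then yields a sequence $\varepsilon=\varepsilon_j\searrow 0$ along which $n_\varepsilon\to n$, $c_\varepsilon\to c$, $m_\varepsilon\to m$ and $u_\varepsilon\to u$ strongly in $L^q_{\mathrm{loc}}$ and a.e., with $\nabla c_\varepsilon\rightharpoonup\nabla c$, $\nabla m_\varepsilon\rightharpoonup\nabla m$ and $\nabla u_\varepsilon\rightharpoonup\nabla u$ weakly. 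Since every nonlinearity pairs a strongly convergent factor with a weakly convergent one, each term of the weak formulation passes to the limit — the inertial term $\kappa(u_\varepsilon\cdot\nabla)u_\varepsilon$ causing no difficulty, as strong $L^2_{\mathrm{loc}}$ convergence of $u_\varepsilon$ already identifies the limit of $\int u_\varepsilon\otimes u_\varepsilon:\nabla\varphi$ — and recovering the pressure $P$ by a De Rham-type argument shows that $(n,c,u,P)$ is a global weak solution of \dref{1.1} in the sense of Definition \ref{df1}.
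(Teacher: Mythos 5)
Your proposal follows essentially the same route as the paper: the same regularization (cut-off $\rho_\varepsilon$, saturation $n_\varepsilon/(1+\varepsilon n_\varepsilon)$, Yosida approximation of the convection), the same coupled functional built from $\int_\Omega n_\varepsilon^{\,p}$ with $p=4\alpha+\tfrac23$ (resp.\ $\int_\Omega n_\varepsilon\ln n_\varepsilon$ when $p=1$, i.e.\ $\alpha=\tfrac1{12}$) together with $\int_\Omega|\nabla c_\varepsilon|^2$ and $\int_\Omega|u_\varepsilon|^2$, closed by the same Gagliardo--Nirenberg book-keeping, followed by time-derivative bounds and an Aubin--Lions passage to the limit. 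The only inaccuracy is your anticipated difficulty with boundary integrals $\int_{\partial\Omega}n_\varepsilon^{\,p-1}(\cdots)$: in the regularized problem the boundary condition is already the homogeneous Neumann condition $\nabla n_\varepsilon\cdot\nu=0$, so no such terms arise.
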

\begin{remark}
(i)  To the best of our knowledge, this is the first analytical work for the full {\bf three-dimensional four-component} chemotaxis-Navier-Stokes system.


(ii) We should pointed that the idea of this paper can not deal with the case $\alpha=0$, since, it is hard to establish the $\varepsilon$-independent estimates
(see the proof of Lemma \ref{lemmaghjffggssddgghhmk4563025xxhjklojjkkk}).

(iii) We have to leave open the question whether the condition \dref{x1.73142vghf48} is
optimal.

\end{remark}

\section{Preliminaries}
Due to the strongly nonlinear term $\kappa(u \cdot \nabla)u$ and  the presence of tensor-valued $S$ in system \dref{1.1}, we need to consider an
appropriately regularized problem of \dref{1.1} at first.
%
According to the ideas in  \cite{Winkler51215}, the corresponding regularized problem is introduced as follows:
\begin{equation}
 \left\{\begin{array}{ll}
   n_{\varepsilon t}+u_{\varepsilon}\cdot\nabla n_{\varepsilon}=\Delta n_{\varepsilon}-\nabla\cdot(n_{\varepsilon}\frac{1}{(1+\varepsilon n_{\varepsilon})}S_\varepsilon(x, n_{\varepsilon}, c_{\varepsilon})\nabla c_{\varepsilon})-n_{\varepsilon}m_{\varepsilon},\quad
x\in \Omega, t>0,\\
       c_{\varepsilon t}+u_{\varepsilon}\cdot\nabla c_{\varepsilon}=\Delta c_{\varepsilon}-c_{\varepsilon}+m_{\varepsilon},\quad
x\in \Omega, t>0,\\
m_{\varepsilon t}+u_{\varepsilon}\cdot\nabla m_{\varepsilon}=\Delta m_{\varepsilon}-n_{\varepsilon}m_{\varepsilon},\quad
x\in \Omega, t>0,\\
u_{\varepsilon t}+\nabla P_{\varepsilon}=\Delta u_{\varepsilon}-\kappa (Y_{\varepsilon}u_{\varepsilon} \cdot \nabla)u_{\varepsilon}+(n_{\varepsilon}+m_{\varepsilon})\nabla \phi,\quad
x\in \Omega, t>0,\\
\nabla\cdot u_{\varepsilon}=0,\quad
x\in \Omega, t>0,\\
 \disp{\nabla n_{\varepsilon}\cdot\nu=\nabla c_{\varepsilon}\cdot\nu=0,u_{\varepsilon}=0,\quad
x\in \partial\Omega, t>0,}\\
\disp{n_{\varepsilon}(x,0)=n_0(x),c_{\varepsilon}(x,0)=c_0(x),m_{\varepsilon}(x,0)=m_0(x),u_{\varepsilon}(x,0)=u_0(x)},\quad
x\in \Omega,\\
 \end{array}\right.\label{1.1fghyuisda}
\end{equation}
where for $\varepsilon\in (0, 1),$
\begin{equation}
\begin{array}{ll}
S_\varepsilon(x, n, c) = \rho_\varepsilon(x)S(x, n, c),~~ x\in\bar{\Omega},~~n\geq0,~~c\geq0
 \end{array}\label{3.10gghhjuuloollyuigghhhyy}
\end{equation}
and
\begin{equation}
 \begin{array}{ll}
 Y_{\varepsilon}w := (1 + \varepsilon A)^{-1}w ~~~~\mbox{for all}~~ w\in L^2_{\sigma}(\Omega)
 \end{array}\label{aasddffgg1.1fghyuisda}
\end{equation}
is the standard Yosida approximation.
Here $(\rho_\varepsilon)_{\varepsilon\in(0,1)} \in C^\infty_0 (\Omega)$
  be a family of standard cut-off functions satisfying $0\leq\rho_\varepsilon\leq 1$
   in $\Omega$
 and $\rho_\varepsilon\nearrow1$ in $\Omega$
 as $\varepsilon\searrow0$.

By an adaptation of well-established fixed point arguments (see e.g. Lemma 2.1 of \cite{Winkler51215} as well as \cite{Winkler11215} and Lemma 2.1 of \cite{Painter55677})
and a suitable
extensibility criterion, one can readily verify the
local solvability of \dref{1.1fghyuisda}. 

%
%
%
\begin{lemma}\label{lemma70}
Assume
that $\varepsilon\in(0,1).$
%
Then there exist $T_{max,\varepsilon}\in  (0,\infty]$ and
a classical solution $(n_\varepsilon, c_\varepsilon, u_\varepsilon, P_\varepsilon)$ of \dref{1.1fghyuisda} in
$\Omega\times(0, T_{max,\varepsilon})$ such that
\begin{equation}
 \left\{\begin{array}{ll}
 n_\varepsilon\in C^0(\bar{\Omega}\times[0,T_{max,\varepsilon}))\cap C^{2,1}(\bar{\Omega}\times(0,T_{max,\varepsilon})),\\
  c_\varepsilon\in  C^0(\bar{\Omega}\times[0,T_{max,\varepsilon}))\cap C^{2,1}(\bar{\Omega}\times(0,T_{max,\varepsilon})),\\
   m_\varepsilon\in  C^0(\bar{\Omega}\times[0,T_{max,\varepsilon}))\cap C^{2,1}(\bar{\Omega}\times(0,T_{max,\varepsilon})),\\
  u_\varepsilon\in  C^0(\bar{\Omega}\times[0,T_{max,\varepsilon}))\cap C^{2,1}(\bar{\Omega}\times(0,T_{max,\varepsilon})),\\
  P_\varepsilon\in  C^{1,0}(\bar{\Omega}\times(0,T_{max,\varepsilon})),\\
   \end{array}\right.\label{1.1ddfghyuisda}
\end{equation}
 classically solving \dref{1.1fghyuisda} in $\Omega\times[0,T_{max,\varepsilon})$.
%
Moreover,  $n_\varepsilon,c_\varepsilon$ and $m_\varepsilon$ are nonnegative in
$\Omega\times(0, T_{max,\varepsilon})$, and
\begin{equation}
\|n_\varepsilon(\cdot, t)\|_{L^\infty(\Omega)}+\|c_\varepsilon(\cdot, t)\|_{W^{1,\infty}(\Omega)}+\|m_\varepsilon(\cdot, t)\|_{W^{1,\infty}(\Omega)}+\|A^\gamma u_\varepsilon(\cdot, t)\|_{L^{2}(\Omega)}\rightarrow\infty~~ \mbox{as}~~ t\nearrow T_{max,\varepsilon},
\label{1.163072x}
\end{equation}
where $\gamma$ is given by \dref{ccvvx1.731426677gg}.
\end{lemma}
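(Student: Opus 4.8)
The plan is to establish Lemma \ref{lemma70} via a contraction-mapping argument in a suitable parabolic solution space, followed by a standard extensibility criterion. First I would fix $\varepsilon\in(0,1)$ and a small $T>0$, and set up a Banach fixed-point scheme on the space
$$
X_T := \bigl\{(n,c,m,u)\in C^0(\bar{\Omega}\times[0,T];\mathbb{R}^{6})\ \big|\ \|(n,c,m,u)\|_{X_T}\leq R\bigr\},
$$
for $R$ chosen larger than the norm of the initial data, where the norm controls $\|n\|_{L^\infty},\|c\|_{W^{1,\infty}},\|m\|_{W^{1,\infty}}$ together with $\|A^\gamma u\|_{L^2}$. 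Given a candidate $(\tilde n,\tilde c,\tilde m,\tilde u)\in X_T$, I would define $\Phi(\tilde n,\tilde c,\tilde m,\tilde u)=(n,c,m,u)$ by solving the \emph{decoupled linear} problems: $n$ from the first equation of \dref{1.1fghyuisda} with the drift coefficient $\tilde u$ and the tensor term evaluated at $(\tilde n,\tilde c)$; $c$ and $m$ from linear parabolic equations with the transport term $\tilde u\cdot\nabla$; and $u$ from the Stokes evolution equation $u_t+Au=\mathbb{P}[-\kappa(Y_\varepsilon\tilde u\cdot\nabla)\tilde u+(\tilde n+\tilde m)\nabla\phi]$ via the analytic semigroup $e^{-tA}$. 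Here the regularizations matter crucially: the factor $\tfrac{1}{1+\varepsilon n_\varepsilon}$ keeps the chemotactic coefficient bounded, the mollified $S_\varepsilon$ is smooth up to the boundary, and the Yosida operator $Y_\varepsilon$ maps $L^2_\sigma$ into $D(A)$ so that $Y_\varepsilon\tilde u$ is as regular as needed to control $(Y_\varepsilon\tilde u\cdot\nabla)\tilde u$ in $L^2$.

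Next I would verify that $\Phi$ maps $X_T$ into itself and is a contraction for $T=T(\varepsilon,R)$ small. The self-mapping property uses standard smoothing estimates for the Neumann heat semigroup $e^{t\Delta}$ (e.g. $\|\nabla e^{t\Delta}\varphi\|_{L^\infty}\lesssim t^{-1/2}\|\varphi\|_{L^\infty}$ and $L^p$–$L^q$ bounds) for the $n,c,m$ components, and the well-known fractional-power estimates $\|A^\gamma e^{-tA}\mathbb{P}\varphi\|_{L^2}\lesssim t^{-\gamma}\|\varphi\|_{L^2}$ plus the embedding $D(A^\gamma)\hookrightarrow L^\infty$ (valid since $\gamma>3/4$) for the fluid component; the nonlinearities are all at most quadratic in the controlled norms (again thanks to the $\varepsilon$-truncation), so each term picks up a positive power of $T$. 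The contraction estimate is the same computation applied to differences, using that $S_\varepsilon\in C^2$ is locally Lipschitz in its arguments. This yields a unique local solution on $[0,T_{\min})$ for some $T_{\min}>0$; parabolic Schauder theory and a bootstrap then upgrade it to the regularity class \dref{1.1ddfghyuisda}, and the solution extends to a maximal interval $[0,T_{max,\varepsilon})$.

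For nonnegativity of $n_\varepsilon,c_\varepsilon,m_\varepsilon$, I would apply the parabolic maximum principle: each of the three scalar equations has the structure $w_t=\Delta w+b\cdot\nabla w - (\text{nonneg. coeff.})\,w + (\text{nonneg. source})$ with homogeneous Neumann data, once one observes that $c_0,m_0\ge 0$ and that $-n_\varepsilon m_\varepsilon$ and $-c_\varepsilon$ are the only sink terms — treating these as zeroth-order coefficients gives $n_\varepsilon,m_\varepsilon\ge 0$, and then $c_\varepsilon$ has the nonnegative source $m_\varepsilon\ge 0$. Finally, the blow-up alternative \dref{1.163072x}: if the indicated norm stayed bounded as $t\nearrow T_{max,\varepsilon}<\infty$, one re-enters the fixed-point argument from a time close to $T_{max,\varepsilon}$ with a uniform existence time, contradicting maximality; the choice of the norm $\|n_\varepsilon\|_{L^\infty}+\|c_\varepsilon\|_{W^{1,\infty}}+\|m_\varepsilon\|_{W^{1,\infty}}+\|A^\gamma u_\varepsilon\|_{L^2}$ is precisely the one that both closes the fixed-point estimates and is preserved by restarting. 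The main technical obstacle is the fluid coupling: one must ensure the Stokes-semigroup term $\int_0^t A^\gamma e^{-(t-s)A}\mathbb{P}[-\kappa(Y_\varepsilon u_\varepsilon\cdot\nabla)u_\varepsilon+(n_\varepsilon+m_\varepsilon)\nabla\phi]\,ds$ is estimable in $L^2$ — here the singularity $(t-s)^{-\gamma}$ with $\gamma<1$ is integrable, $\nabla\phi\in L^\infty$ by \dref{dd1.1fghyuisdakkkllljjjkk}, and $(Y_\varepsilon u_\varepsilon\cdot\nabla)u_\varepsilon$ is controlled because $Y_\varepsilon u_\varepsilon\in D(A)\hookrightarrow W^{2,2}\hookrightarrow L^\infty$ while $\nabla u_\varepsilon\in L^2$ — but all of this is by now routine, so I would only sketch it and cite \cite{Winkler51215,Winkler11215,Painter55677}.
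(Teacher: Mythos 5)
Your proposal is correct and follows essentially the same route the paper takes: the paper does not write out a proof but simply invokes the well-established fixed-point argument of Lemma 2.1 in \cite{Winkler51215} (together with \cite{Winkler11215} and \cite{Painter55677}) plus a standard extensibility criterion, which is precisely the contraction-mapping scheme, semigroup estimates, maximum-principle nonnegativity argument, and restart-based blow-up alternative that you sketch. Your write-up in fact supplies more detail than the paper itself.
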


\section{Some  basic estimates and  global existence in the regularized problems}
In this section we want to ensure that the time-local solutions obtained in Lemma \ref{lemma70} are in fact global
solutions. To this end, in a first step, upon a straightforward integration of the first, two and three equations in \dref{1.1fghyuisda} over $\Omega$, we can establish
the following basic estimates by using the maximum principle to the second and third equations. The detail proof
 can be found in Lemma 2.2 of \cite{EspejojjEspejojainidd793} (see also \cite{Lidfff00}).
Therefore, we list them here without proof.

%
%


\begin{lemma}\label{fvfgfflemma45}
There exists 
$\lambda > 0$ independent of $\varepsilon$ such that the solution of \dref{1.1fghyuisda} satisfies
%
%
\begin{equation}
\int_{\Omega}{n_{\varepsilon}}+\int_{\Omega}{c_{\varepsilon}}+\|m_{\varepsilon}(\cdot,t)\|_{L^\infty(\Omega)}+\|c_{\varepsilon}(\cdot,t)\|_{L^\infty(\Omega)}\leq \lambda~~\mbox{for all}~~ t\in(0, T_{max,\varepsilon})
\label{ddfgczhhhh2.5ghju48cfg924ghyuji}
\end{equation}
as well as
\begin{equation}
\|m_{\varepsilon}(\cdot,t)\|_{L^2(\Omega)}^2+2\int_0^{t}\int_{\Omega}{|\nabla m_{\varepsilon}|^{2}}\leq \lambda~~\mbox{for all}~~ t\in(0, T_{max,\varepsilon})
\label{ddczhjjjj2.5ghju48cfg9ssdd24}
\end{equation}
and
\begin{equation}
\int_0^{t}\int_{\Omega}{|\nabla c_{\varepsilon}|^{2}}\leq \lambda~~\mbox{for all}~~ t\in(0, T_{max,\varepsilon}).
\label{ddczhjjjj2.5ghju48cfgffff924}
\end{equation}
%
%
\end{lemma}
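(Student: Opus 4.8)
The plan is to obtain the four ingredients of the lemma one at a time by testing the individual equations of the regularized system \dref{1.1fghyuisda} against suitable multipliers, exploiting throughout that $\nabla\cdot u_\varepsilon=0$ together with $u_\varepsilon=0$ on $\partial\Omega$ forces every convective contribution $\int_\Omega(u_\varepsilon\cdot\nabla w)w$ to vanish after integration by parts, and that $n_\varepsilon,c_\varepsilon,m_\varepsilon\geq0$ by Lemma \ref{lemma70}.

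First I would establish \dref{ddfgczhhhh2.5ghju48cfg924ghyuji}. Integrating the first equation over $\Omega$, the chemotactic flux is a divergence and hence drops out under the no-flux condition, leaving $\frac{d}{dt}\int_\Omega n_\varepsilon=-\int_\Omega n_\varepsilon m_\varepsilon\leq0$, so that $\int_\Omega n_\varepsilon\leq\int_\Omega n_0$. Since $-n_\varepsilon m_\varepsilon\leq0$, a parabolic comparison argument applied to the third equation yields $\|m_\varepsilon(\cdot,t)\|_{L^\infty}\leq\|m_0\|_{L^\infty}$; inserting this into the second equation and comparing with the spatially homogeneous supersolution solving $y'=-y+\|m_0\|_{L^\infty}$ gives $\|c_\varepsilon(\cdot,t)\|_{L^\infty}\leq\max\{\|c_0\|_{L^\infty},\|m_0\|_{L^\infty}\}$. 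The bound for $\int_\Omega c_\varepsilon$ then follows either directly from this $L^\infty$ estimate or from the ODE $\frac{d}{dt}\int_\Omega c_\varepsilon=-\int_\Omega c_\varepsilon+\int_\Omega m_\varepsilon$ together with $\int_\Omega m_\varepsilon\leq\int_\Omega m_0$. Choosing $\lambda$ larger than the sum of these four quantities proves \dref{ddfgczhhhh2.5ghju48cfg924ghyuji}.

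For \dref{ddczhjjjj2.5ghju48cfg9ssdd24} I would test the third equation against $m_\varepsilon$; the convective term vanishes and $-\int_\Omega n_\varepsilon m_\varepsilon^2\leq0$, so $\frac12\frac{d}{dt}\|m_\varepsilon\|_{L^2}^2+\|\nabla m_\varepsilon\|_{L^2}^2\leq0$, and integrating in time yields \dref{ddczhjjjj2.5ghju48cfg9ssdd24} with $\lambda\geq\|m_0\|_{L^2}^2$. In particular this furnishes the uniform bound $\int_0^t\int_\Omega|\nabla m_\varepsilon|^2\leq\lambda/2$, which will be the crucial input for the last estimate.

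The main obstacle is \dref{ddczhjjjj2.5ghju48cfgffff924}, precisely because it must be uniform in $t$: testing the second equation naively against $c_\varepsilon$ produces a source $\int_\Omega m_\varepsilon c_\varepsilon$ that I can only control by $\int_0^t\|m_\varepsilon\|_{L^2}^2$, and this may grow linearly in time (for instance whenever $\int_\Omega m_\varepsilon$ stays bounded below, which by the conservation identity $\frac{d}{dt}\int_\Omega(n_\varepsilon-m_\varepsilon)=0$ happens as soon as $\int_\Omega m_0>\int_\Omega n_0$). To circumvent this I would work with the spatial averages $\bar m_\varepsilon=|\Omega|^{-1}\int_\Omega m_\varepsilon$ and $\bar c_\varepsilon=|\Omega|^{-1}\int_\Omega c_\varepsilon$ and test the second equation against $c_\varepsilon-\bar c_\varepsilon$. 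Since $\int_\Omega(c_\varepsilon-\bar c_\varepsilon)=0$, the time-derivative term collapses to $\frac12\frac{d}{dt}\|c_\varepsilon-\bar c_\varepsilon\|_{L^2}^2$, the convective term again vanishes, $\nabla c_\varepsilon=\nabla(c_\varepsilon-\bar c_\varepsilon)$, and the source reduces to $\int_\Omega(m_\varepsilon-\bar m_\varepsilon)(c_\varepsilon-\bar c_\varepsilon)$. Here the Poincaré inequality bounds $\|m_\varepsilon-\bar m_\varepsilon\|_{L^2}\leq C_P\|\nabla m_\varepsilon\|_{L^2}$ and $\|c_\varepsilon-\bar c_\varepsilon\|_{L^2}\leq C_P\|\nabla c_\varepsilon\|_{L^2}$, so after a Young step the gradient of $c_\varepsilon$ is absorbed into the dissipation and the remaining right-hand side is a multiple of $\|\nabla m_\varepsilon\|_{L^2}^2$. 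Integrating in time and inserting the bound on $\int_0^t\|\nabla m_\varepsilon\|_{L^2}^2$ from \dref{ddczhjjjj2.5ghju48cfg9ssdd24} then yields \dref{ddczhjjjj2.5ghju48cfgffff924} with a constant depending only on $\|c_0\|_{L^2}$, $C_P$ and $\lambda$. Enlarging $\lambda$ once more to dominate all constants appearing above completes the proof.
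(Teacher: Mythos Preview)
Your argument is correct. The paper itself does not supply a proof of this lemma at all: it simply states that the details ``can be found in Lemma 2.2 of \cite{EspejojjEspejojainidd793} (see also \cite{Lidfff00})'' and lists the conclusions. Your write-up therefore provides more than the paper does, and your handling of each item is sound; in particular, you correctly identified that the only non-routine point is the \emph{uniform-in-time} bound \dref{ddczhjjjj2.5ghju48cfgffff924}, and your subtraction-of-the-mean / Poincar\'e argument reducing the source to $\int_0^t\|\nabla m_\varepsilon\|_{L^2}^2$ (which is uniformly bounded by \dref{ddczhjjjj2.5ghju48cfg9ssdd24}) is exactly the mechanism used in the cited references.
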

With all the above estimates at hand, we can now establish
the global existence result of our approximate solutions.

\begin{lemma}\label{kkklemmaghjmk4563025xxhjklojjkkk}
Let $\alpha\geq0$. Then
for all $\varepsilon\in(0,1),$ the solution of  \dref{1.1fghyuisda} is global in time.
\end{lemma}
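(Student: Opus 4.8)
The strategy is the standard one for establishing global existence of classical solutions to regularized chemotaxis--fluid systems: assuming $T_{max,\varepsilon}<\infty$ and invoking the extensibility criterion \dref{1.163072x}, I would derive $\varepsilon$-dependent (but $T_{max,\varepsilon}$-independent) a priori bounds for
$$\|n_\varepsilon(\cdot,t)\|_{L^\infty(\Omega)}+\|c_\varepsilon(\cdot,t)\|_{W^{1,\infty}(\Omega)}+\|m_\varepsilon(\cdot,t)\|_{W^{1,\infty}(\Omega)}+\|A^\gamma u_\varepsilon(\cdot,t)\|_{L^2(\Omega)}$$
on $(0,T_{max,\varepsilon})$, contradicting the blow-up alternative. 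Since $\varepsilon$ is fixed here, the regularization terms $\tfrac{1}{1+\varepsilon n_\varepsilon}$ and $Y_\varepsilon u_\varepsilon$ give genuine help: the cross-diffusive flux is bounded by $\tfrac{C_S}{\varepsilon}|\nabla c_\varepsilon|$ and the convective term $\kappa(Y_\varepsilon u_\varepsilon\cdot\nabla)u_\varepsilon$ has the smoothing factor $Y_\varepsilon=(1+\varepsilon A)^{-1}$ built in, so the nonlinearity effectively behaves like the Stokes case.

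First I would record the basic mass/boundedness facts from Lemma \ref{fvfgfflemma45}: $\|n_\varepsilon\|_{L^1}$, $\|c_\varepsilon\|_{L^\infty}$, $\|m_\varepsilon\|_{L^\infty}$ are bounded, and $\nabla c_\varepsilon,\nabla m_\varepsilon\in L^2_{loc}(L^2)$. Next I would bootstrap $c_\varepsilon$: from $\|m_\varepsilon\|_{L^\infty}\le\lambda$ and parabolic smoothing for $c_{\varepsilon t}+u_\varepsilon\cdot\nabla c_\varepsilon=\Delta c_\varepsilon-c_\varepsilon+m_\varepsilon$, combined with the $L^\infty$ bound already in hand, one obtains $\nabla c_\varepsilon\in L^\infty((0,T_{max,\varepsilon});L^q(\Omega))$ for any $q<\infty$ via semigroup estimates (this needs a preliminary bound on $u_\varepsilon$ in some $L^p(L^q)$, which at the first stage can be taken from the $u_\varepsilon\in L^\infty(L^2)\cap L^2(H^1)$ energy estimate obtained by testing the Stokes equation with $u_\varepsilon$ and using $\|n_\varepsilon+m_\varepsilon\|_{L^{6/5}}$ controlled by $L^1$ and $L^\infty$ bounds plus interpolation — here $\|m_\varepsilon\|_{L^\infty}$ makes this easy, and $\|n_\varepsilon\|_{L^1}$ suffices since the forcing only needs $L^{6/5}$). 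Then an $L^p$-iteration (Moser--Alikakos or a semigroup Gronwall argument) applied to the $n_\varepsilon$-equation, using the now-controlled drift $b_\varepsilon:=\tfrac{n_\varepsilon}{1+\varepsilon n_\varepsilon}S_\varepsilon\nabla c_\varepsilon$ which satisfies $|b_\varepsilon|\le\tfrac{C_S}{\varepsilon}|\nabla c_\varepsilon|\in L^\infty(L^q)$, yields $\|n_\varepsilon(\cdot,t)\|_{L^\infty}\le C(\varepsilon,\lambda)$. Finally I would upgrade $u_\varepsilon$: with $n_\varepsilon+m_\varepsilon$ now bounded in $L^\infty$, standard $L^p$--$L^q$ estimates for the (Navier--)Stokes evolution with the dissipativity of $A$ and the boundedness of $Y_\varepsilon$ on $L^r_\sigma$ give $\|A^\gamma u_\varepsilon(\cdot,t)\|_{L^2}\le C(\varepsilon)$ for the given $\gamma\in(\tfrac34,1)$; then $\nabla c_\varepsilon,\nabla m_\varepsilon\in W^{1,\infty}$ follows by one more round of parabolic regularity.

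The main obstacle is purely bookkeeping of the $\varepsilon$-dependence: none of the constants need to be uniform in $\varepsilon$, so the argument is robust, but one must be careful that the Navier--Stokes coupling does not blow up in finite time — this is where the Yosida regularization $Y_\varepsilon u_\varepsilon$ is essential, as $\|Y_\varepsilon u_\varepsilon\|_{L^\infty}\le C(\varepsilon)\|u_\varepsilon\|_{L^2}$ by $(1+\varepsilon A)^{-1}$ mapping $L^2_\sigma$ into $D(A)\hookrightarrow L^\infty$, so the convective term is effectively a bounded perturbation once $u_\varepsilon\in L^\infty(L^2)$ is known. Hence I expect the proof to reduce to: energy estimate $\Rightarrow$ $u_\varepsilon\in L^\infty(L^2)\cap L^2(H^1)$; semigroup estimate $\Rightarrow$ $\nabla c_\varepsilon\in L^\infty(L^q)$; $L^p$-iteration $\Rightarrow$ $n_\varepsilon\in L^\infty(L^\infty)$; fractional-power estimate $\Rightarrow$ $A^\gamma u_\varepsilon\in L^\infty(L^2)$; contradiction with \dref{1.163072x}. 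No new inequality is needed here — the delicate $\varepsilon$-uniform estimates are postponed to later sections.
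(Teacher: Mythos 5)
Your overall strategy coincides with the paper's: assume $T_{max,\varepsilon}<\infty$, bootstrap $\varepsilon$-dependent bounds through $u_\varepsilon$, $c_\varepsilon$ and $n_\varepsilon$, and contradict the extensibility criterion \dref{1.163072x}. The later stages of your outline (semigroup/testing arguments for $\nabla c_\varepsilon$, an $L^p$-type argument for $\|n_\varepsilon\|_{L^\infty}$ using the drift bound $|b_\varepsilon|\le \tfrac{C_S}{\varepsilon}|\nabla c_\varepsilon|$, and the fractional-power estimate for $A^\gamma u_\varepsilon$) match Steps 3--5 of the paper, up to interchangeable technical choices.

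However, your very first step fails as stated. You claim the energy identity
\[
\frac12\frac{d}{dt}\int_\Omega|u_\varepsilon|^2+\int_\Omega|\nabla u_\varepsilon|^2=\int_\Omega(n_\varepsilon+m_\varepsilon)u_\varepsilon\cdot\nabla\phi
\]
can be closed using only $\|n_\varepsilon\|_{L^1(\Omega)}$ because ``the forcing only needs $L^{6/5}$.'' In three dimensions the right-hand side is controlled via $\|n_\varepsilon+m_\varepsilon\|_{L^{6/5}(\Omega)}\|u_\varepsilon\|_{L^{6}(\Omega)}$ with $\|u_\varepsilon\|_{L^6}\lesssim\|\nabla u_\varepsilon\|_{L^2}$, and $L^1(\Omega)$ does \emph{not} control $L^{6/5}(\Omega)$; no interpolation is available because at that stage you have no higher integrability of $n_\varepsilon$. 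The paper resolves this by inserting a preliminary step \emph{before} the $u$-energy estimate: testing the first equation of \dref{1.1fghyuisda} with $n_\varepsilon$ and using the crude regularized bound $\tfrac{n_\varepsilon}{1+\varepsilon n_\varepsilon}|S_\varepsilon|\le \tfrac{C_S}{\varepsilon}$ together with the $\varepsilon$-independent spatio-temporal bound $\int_0^t\int_\Omega|\nabla c_\varepsilon|^2\le\lambda$ from \dref{ddczhjjjj2.5ghju48cfgffff924}, which yields $\sup_{t<T_{max,\varepsilon}}\|n_\varepsilon(\cdot,t)\|_{L^2(\Omega)}\le C(\varepsilon)$ by Young's inequality and time integration. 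With that $L^2$ bound in hand the $u$-energy estimate closes, and the rest of your bootstrap goes through. So the gap is one of ordering and of a false embedding claim; the missing ingredient (the $L^2$ bound on $n_\varepsilon$ exploiting the regularization and Lemma \ref{fvfgfflemma45}) is available but must come first.
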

\begin{proof}
{\bf Step 1: The bounded of $\|n_\varepsilon(\cdot,t)\|_{L^2(\Omega)}~~\mbox{for all}~~ t\in(0, T_{max,\varepsilon}):$}

Multiply the first equation in $\dref{1.1fghyuisda}$ by $ n_{\varepsilon}$
 and  using $\nabla\cdot u_\varepsilon=0$, we derive
\begin{equation}
\begin{array}{rl}
&\disp{\frac{1}{{2}}\frac{d}{dt}\|{ n_{\varepsilon} }\|^{{{2}}}_{L^{{2}}(\Omega)}+
\int_{\Omega}  |\nabla n_{\varepsilon}|^2}\\
=&\disp{-
\int_{\Omega}  n_{\varepsilon}\nabla\cdot(n_{\varepsilon}\frac{1}{(1+\varepsilon n_{\varepsilon})}S_\varepsilon(x, n_{\varepsilon}, c_{\varepsilon})\cdot\nabla c_{\varepsilon})-\int_{\Omega}  n_{\varepsilon}^2m_{\varepsilon}}\\
\leq&\disp{
\int_{\Omega}  n_{\varepsilon}\frac{1}{(1+\varepsilon n_{\varepsilon})}|S_\varepsilon(x, n_{\varepsilon}, c_{\varepsilon})||\nabla n_{\varepsilon}||\nabla c_{\varepsilon}|~~\mbox{for all}~~ t\in(0, T_{max,\varepsilon}),}
\end{array}
\label{55hhjjcffghhhjkklddffgglffghhhz2.5}
\end{equation}
where the last inequality we have used the nonnegativity of $n_{\varepsilon}$ and $m_{\varepsilon}$. 
Recalling \dref{x1.73142vghf48gg}, by Young inequality, one can see that
\begin{equation}
\begin{array}{rl}
&\disp\int_{\Omega} n_{\varepsilon}\frac{1}{(1+\varepsilon n_{\varepsilon})}|S_\varepsilon(x, n_{\varepsilon}, c_{\varepsilon})||\nabla n_{\varepsilon}||\nabla c_{\varepsilon}|\\
\leq&\disp{\frac{1}{\varepsilon}C_S\int_{\Omega}  |\nabla n_{\varepsilon}||\nabla c_{\varepsilon}|}\\
\leq&\disp{\frac{1}{2}\int_{\Omega}  |\nabla n_{\varepsilon}|^2+C_1\int_{\Omega} |\nabla c_{\varepsilon}|^2~~\mbox{for all}~~ t\in(0, T_{max,\varepsilon}),}
\end{array}
\label{55hhjjcffghhhjkkllfffghggggghhfghhhz2.5}
\end{equation}
where $C_1$ is a positive constant, as all subsequently appearing constants $C_2, C_3, \ldots$ possibly depend on
 $\varepsilon$.
  Substituting \dref{55hhjjcffghhhjkkllfffghggggghhfghhhz2.5} into \dref{55hhjjcffghhhjkklddffgglffghhhz2.5} and using \dref{ddczhjjjj2.5ghju48cfgffff924}, we derive that
 \begin{equation}
\begin{array}{rl}
&\disp{\int_{\Omega}n^{2}_{\varepsilon}\leq C_2~~~\mbox{for all}~~ t\in (0, T_{max,\varepsilon}).}\\
\end{array}
\label{czfvgb2.5ghhjuyucffsdffhhhhhhhjjggcvviihjj}
\end{equation}

{\bf Step 2: The bounded of $\|u_\varepsilon(\cdot,t)\|_{L^2(\Omega)}~~\mbox{for all}~~ t\in(0, T_{max,\varepsilon}):$}
Next, testing the
fourth equation of \dref{1.1fghyuisda} by $u_\varepsilon$, integrating by parts and using $\nabla\cdot u_{\varepsilon}=0$
\begin{equation}
\begin{array}{rl}
\disp{\frac{1}{2}\frac{d}{dt}\int_{\Omega}{|u_{\varepsilon}|^2}+\int_{\Omega}{|\nabla u_{\varepsilon}|^2}}= &\disp{ \int_{\Omega}(n_{\varepsilon}+m_{\varepsilon})u_{\varepsilon}\cdot\nabla \phi~~\mbox{for all}~~ t\in(0, T_{max,\varepsilon}),}\\
\end{array}
\label{ddddfgcz2.5ghju48cfg924ghyuji}
\end{equation}
where we used the facts that $\nabla \cdot u_{\varepsilon}\equiv0$ and $\nabla\cdot(1 + \varepsilon A)^{-1}u_{\varepsilon}\equiv0$.
 In light of \dref{dd1.1fghyuisdakkkllljjjkk}, \dref{ddczhjjjj2.5ghju48cfg9ssdd24} and \dref{czfvgb2.5ghhjuyucffsdffhhhhhhhjjggcvviihjj} this readily
implies
 \begin{equation}
\begin{array}{rl}
&\disp{\int_{\Omega}|u_{\varepsilon}|^{2}\leq C_3~~~\mbox{for all}~~ t\in (0, T_{max,\varepsilon})}\\
\end{array}
\label{czfvgsssssb2.5ghddhjuyucffhhhhhhhjjggcvviihjj}
\end{equation}
and some $C_3> 0.$
 Relying on properties of the Yosida approximation
$Y_{\varepsilon}$, we can also immediately find $ C_4 > 0$ and $ C_5 > 0$  such that
%
\begin{equation}
\|Y_{\varepsilon}u_{\varepsilon}\|_{L^\infty(\Omega)}=\|(I+\varepsilon A)^{-1}u_{\varepsilon}\|_{L^\infty(\Omega)}\leq C_4\|u_{\varepsilon}(\cdot,t)\|_{L^2(\Omega)}\leq C_5~~\mbox{for all}~~t\in(0,T_{max,\varepsilon}).
\label{ssdcfvgdhhjjdfghgghjjnnhhkklld911cz2.5ghju48}
\end{equation}
{\bf Step 3: The bounded of $\| u_\varepsilon(\cdot,t)\|_{L^{\infty}(\Omega)}~~\mbox{for all}~~ t\in(0, T_{max,\varepsilon}):$}
Next,  testing the projected Stokes equation $u_{\varepsilon t} +Au_{\varepsilon} =  \mathcal{P}[-\kappa (Y_{\varepsilon}u_{\varepsilon} \cdot \nabla)u_{\varepsilon}+n_{\varepsilon}\nabla \phi]$ by $Au_{\varepsilon}$, we derive
%
\begin{equation}
\begin{array}{rl}
&\disp{\frac{1}{{2}}\frac{d}{dt}\|A^{\frac{1}{2}}u_{\varepsilon}\|^{{{2}}}_{L^{{2}}(\Omega)}+
\int_{\Omega}|Au_{\varepsilon}|^2 }\\
=&\disp{ \int_{\Omega}Au_{\varepsilon}\mathcal{P}(-\kappa
(Y_{\varepsilon}u_{\varepsilon} \cdot \nabla)u_{\varepsilon})+ \int_{\Omega}\mathcal{P}[(n_{\varepsilon}+m_{\varepsilon})\nabla\phi] Au_{\varepsilon}}\\
\leq&\disp{ \frac{1}{2}\int_{\Omega}|Au_{\varepsilon}|^2+\kappa^2\int_{\Omega}
|(Y_{\varepsilon}u_{\varepsilon} \cdot \nabla)u_{\varepsilon}|^2+ \|\nabla\phi\|^2_{L^\infty(\Omega)}\int_{\Omega}(n_{\varepsilon}^2+m_{\varepsilon}^2)}\\
\leq&\disp{ \frac{1}{2}\int_{\Omega}|Au_{\varepsilon}|^2+C_6\int_{\Omega}
|\nabla u_{\varepsilon}|^2+ C_7~~\mbox{for all}~~t\in(0,T_{max,\varepsilon})}\\
\end{array}
\label{ddfghgghjjnnhhkklld911cz2.5ghju48}
\end{equation}
by using \dref{ddczhjjjj2.5ghju48cfg9ssdd24} as well as \dref{czfvgb2.5ghhjuyucffsdffhhhhhhhjjggcvviihjj} and \dref{ssdcfvgdhhjjdfghgghjjnnhhkklld911cz2.5ghju48}.
Hence,
\dref{ddfghgghjjnnhhkklld911cz2.5ghju48} implies
 \begin{equation}
\begin{array}{rl}
&\disp{\int_{\Omega}|\nabla u_{\varepsilon}|^{2}\leq C_8~~~\mbox{for all}~~ t\in (0, T_{max,\varepsilon})}\\
\end{array}
\label{czfvgsssssb2.5gcccchddhjuyucffhhhhhhhjjggcvviihjj}
\end{equation}
 by some basic calculation.
Now, let $h_{\varepsilon}(x,t)=\mathcal{P}[n_{\varepsilon}\nabla \phi-\kappa (Y_{\varepsilon}u_{\varepsilon} \cdot \nabla)u_{\varepsilon} ]$.
Then \begin{equation}
\|h_{\varepsilon}(\cdot,t)\|_{L^{2}(\Omega)}\leq C_{9} ~~~\mbox{for all}~~ t\in(0,T_{max,\varepsilon})
\label{33444cfghhh29fgsssgggx96302222114}
\end{equation}
by using \dref{czfvgb2.5ghhjuyucffsdffhhhhhhhjjggcvviihjj} and \dref{czfvgsssssb2.5gcccchddhjuyucffhhhhhhhjjggcvviihjj}. Now, we express $A^\gamma u_{\varepsilon}$ by its variation-of-constants representation and make use of well-known
smoothing properties of the Stokes semigroup (\cite{Giga1215}) to obtain $C_{10}> 0$ such that
\begin{equation}
\begin{array}{rl}
\|A^\gamma u_{\varepsilon}(\cdot, t)\|_{L^2(\Omega)}\leq&\disp{C_{10}~~ \mbox{for all}~~ t\in(0,T_{max,\varepsilon}),}\\
\end{array}
\label{cz2.57151ccvvhccvvhjjjkkhhggdddjjllll}
\end{equation}
where $\gamma\in (\frac{3}{4}, 1)$. Since,
 $D(A^\gamma)$ is continuously embedded into $L^\infty(\Omega)$ by $\gamma>\frac{3}{4}$, so that, \dref{cz2.57151ccvvhccvvhjjjkkhhggdddjjllll} yields to
  for some positive constant $C_{11}$ such that
 \begin{equation}
\begin{array}{rl}
\|u_{\varepsilon}(\cdot, t)\|_{L^\infty(\Omega)}\leq  C_{11}~~ \mbox{for all}~~ t\in(0,T_{max,\varepsilon}).\\
\end{array}
\label{cz2.5jkkcvvvhjkfffffkhhgll}
\end{equation}

{\bf Step 4: The bounded of $\| c_\varepsilon(\cdot,t)\|_{W^{1,\infty}(\Omega)}~~\mbox{for all}~~ t\in(0, T_{max,\varepsilon}):$}
Now,  test the second equation of $\dref{1.1fghyuisda}$ by $-\Delta c_{\varepsilon}$ and obtain, upon two applications of
Young¡¯s inequality, that
\begin{equation}
\begin{array}{rl}
&\disp\frac{1}{{2}}\disp\frac{d}{dt}\|\nabla{c_{\varepsilon}}\|^{{{2}}}_{L^{{2}}(\Omega)}+
\int_{\Omega} |\Delta c_{\varepsilon}|^2+ \int_{\Omega} |\nabla c_{\varepsilon}|^2\\
=&\disp{-\int_{\Omega} m_{\varepsilon}\Delta c_{\varepsilon}+\int_{\Omega} \Delta c_{\varepsilon}u_{\varepsilon}\cdot\nabla c_{\varepsilon}}\\
\leq&\disp{\frac{1}{2}\int_{\Omega} |\Delta c_{\varepsilon}|^2+\int_{\Omega} m_{\varepsilon}^2+\|u_{\varepsilon}\|^2_{L^\infty(\Omega)}\int_{\Omega} |\nabla c_{\varepsilon}|^2 .}\\
\end{array}
\label{hhxxcdfvvjjczhghhhjj2.5}
\end{equation}
 Recalling the bounds provided by  \dref{cz2.5jkkcvvvhjkfffffkhhgll} and \dref{ddczhjjjj2.5ghju48cfg9ssdd24}, this immediately implies
\begin{equation}
\int_{\Omega}{|\nabla c_{\varepsilon}(\cdot,t)|^2}\leq C_{12}~~\mbox{for all}~~ t\in(0, T_{max,\varepsilon}).
\label{ddxxxcvvdsssdddcvddffbbggddczv.5ghcfg924ghyuji}
\end{equation}
In light of Lemma 2.1 of \cite{Ishida} and the Young inequality, we have
\begin{equation}
\begin{array}{rl}
&\|\nabla c_{\varepsilon}(\cdot, t)\|_{L^{\infty}(\Omega)}\\
\leq&\disp{C_{13}(1+\|c_{\varepsilon}-u_{\varepsilon}\cdot c_{\varepsilon}\|_{L^4(\Omega)})}\\
\leq&\disp{C_{13}(1+\|c_{\varepsilon}\|_{L^4(\Omega)}+\|u_{\varepsilon}\|_{L^\infty(\Omega)} \|\nabla c_{\varepsilon}\|_{L^4(\Omega)})}\\
\leq&\disp{C_{13}(1+\|c_{\varepsilon}\|_{L^4(\Omega)}+\|u_{\varepsilon}\|_{L^\infty(\Omega)} \|\nabla c_{\varepsilon}\|_{L^\infty(\Omega)}^{\frac{1}{2}}\|\nabla c_{\varepsilon}\|_{L^2(\Omega)}^{\frac{1}{2}})}\\
\leq&\disp{C_{14}(1+ \|\nabla c_{\varepsilon}\|_{L^\infty(\Omega)}^{\frac{1}{2}})~~ \mbox{for all}~~ t\in(0,T_{max,\varepsilon}),}\\
\end{array}
\label{zjccffgbhjcvvvbscddddz2.5297x96301ku}
\end{equation}
which combined  with \dref{ddfgczhhhh2.5ghju48cfg924ghyuji} implies that
 \begin{equation}
\begin{array}{rl}
\|c_{\varepsilon}(\cdot, t)\|_{W^{1,\infty}(\Omega)}\leq  C_{15}~~ \mbox{for all}~~ t\in(0,T_{max,\varepsilon}).\\
\end{array}
\label{cz2.5jkkcvvvhjkmmffffllfkhhgll}
\end{equation}

{\bf Step 5: The bounded of $\| n_\varepsilon(\cdot,t)\|_{L^{\infty}(\Omega)}~~\mbox{for all}~~ t\in(0, T_{max,\varepsilon}):$}

Furthermore, applying the variation-of-constants formula to the $n_{\varepsilon}$-equation in \dref{1.1fghyuisda}, we get
\begin{equation}
n_{\varepsilon}(t)=e^{t\Delta}n_{\varepsilon}(\cdot,0)-\int_{0}^{t}e^{(t-s)\Delta}\nabla\cdot(n_{\varepsilon}(\cdot,s)\tilde{h}_{\varepsilon}(\cdot,s)) ds-\int_{0}^{t}e^{(t-s)\Delta}n_{\varepsilon}(\cdot,s)m_{\varepsilon}(\cdot,s)ds,~~ t\in(0,T_{max,\varepsilon}),
\label{5555fghbnmcz2.5ghjjjkkklu48cfg924ghyuji}
\end{equation}
where $\tilde{h}_{\varepsilon}:=\frac{1}{(1+\varepsilon n_{\varepsilon})}S_\varepsilon(x, n_{\varepsilon}, c_{\varepsilon})\nabla c_{\varepsilon}+u_\varepsilon$. Next, by \dref{x1.73142vghf48gg}, \dref{cz2.5jkkcvvvhjkmmffffllfkhhgll} and \dref{cz2.5jkkcvvvhjkfffffkhhgll}, we have
 $$
\begin{array}{rl}
\|\tilde{h}_{\varepsilon}(\cdot, t)\|_{L^{\infty}(\Omega)}\leq  C_{16}~~ \mbox{for all}~~ t\in(0,T_{max,\varepsilon}).\\
\end{array}
$$
As the last summand in \dref{5555fghbnmcz2.5ghjjjkkklu48cfg924ghyuji} is nonnegative by the maximum principle, so that, we can thus estimate
\begin{equation}
\begin{array}{rl}
\disp\|n_{\varepsilon}(\cdot,t)\|_{L^\infty(\Omega)}\leq&\|e^{t\Delta}n_{\varepsilon}(\cdot,0)\|_{L^\infty(\Omega)}+\disp\int_{0}^t\| e^{(t-s)\Delta}\nabla\cdot(n_{\varepsilon}(\cdot,s)\tilde{h}_{\varepsilon}(\cdot,s)\|_{L^\infty(\Omega)}ds\\
\leq&\disp\|n_{0}\|_{L^\infty(\Omega)}+\disp C_{17}\int_{0}^t(t-s)^{-\frac{7}{8}}e^{-\lambda_1(t-s)}\|n_{\varepsilon}(\cdot,s)\tilde{h}_{\varepsilon}(\cdot,s)\|_{L^4(\Omega)}ds\\
\leq&\disp\|n_{0}\|_{L^\infty(\Omega)}+\disp C_{18}\int_{0}^t(t-s)^{-\frac{7}{8}}e^{-\lambda_1(t-s)}\|n_{\varepsilon}(\cdot,s)\|_{L^4(\Omega)}ds\\
\leq&\disp\|n_{0}\|_{L^\infty(\Omega)}+\disp C_{18}\int_{0}^t(t-s)^{-\frac{7}{8}}e^{-\lambda_1(t-s)}\|n_{\varepsilon}(\cdot,s)\|_{L^\infty(\Omega)}^{\frac{1}{2}}
\|n_{\varepsilon}(\cdot,s)\|_{L^2(\Omega)}^{\frac{1}{2}}ds\\
\leq&\disp\|n_{0}\|_{L^\infty(\Omega)}+\disp C_{19}\sup_{s\in(0, T_{max,\varepsilon})}\|n_{\varepsilon}(\cdot,s)\|_{L^\infty(\Omega)}^{\frac{1}{2}}~~\mbox{for all}~~ t\in(0,T_{max,\varepsilon}),\\
\end{array}
\label{ccvbccvvbbnnndffghhjjvcvvbcsssscfbbnfgbghjjccmmllffvvggcvvvvbbjjkkdffzjscz2.5297x9630xxy}
\end{equation}
where
$$\begin{array}{rl}\disp C_{18}=\int_{0}^{\infty}s^{-\frac{7}{8}}e^{-\lambda_1s}\|n_{\varepsilon}(\cdot,s)\|_{L^2(\Omega)}^{\frac{1}{2}}
&\disp{<+\infty,}\\
\end{array}
$$
$\lambda_1$ is the first nonzero eigenvalue of $-\Delta$ on $\Omega$ under
the Neumann boundary condition.
And  thereby
\begin{equation}
\begin{array}{rl}
\|n_{\varepsilon}(\cdot, t)\|_{L^{\infty}(\Omega)}\leq&\disp{C_{19}~~ \mbox{for all}~~ t\in(0,T_{max,\varepsilon})}\\
\end{array}
\label{cz2ddff.57151ccvsssshhjjjkkkuuifghhhivhccvvhjjjkkhhggjjllll}
\end{equation}
by using the Young inequality.
%

Assume that $T_{max,\varepsilon}< \infty$.
In view of  \dref{cz2.57151ccvvhccvvhjjjkkhhggdddjjllll},
\dref{cz2.5jkkcvvvhjkmmffffllfkhhgll} and \dref{cz2ddff.57151ccvsssshhjjjkkkuuifghhhivhccvvhjjjkkhhggjjllll}, we apply Lemma \ref{lemma70} to reach a contradiction.
\end{proof}

\section{A priori estimates for the regularized problem \dref{1.1fghyuisda} which is independent of $\varepsilon$ }
Since we want to obtain a weak solution of \dref{1.1} by means of taking $\varepsilon\searrow 0$ in \dref{1.1fghyuisda}, we will require
regularity information which is independent of $\varepsilon\in (0,1)$. The main portion of important estimates will be
prepared in the following section.

\begin{lemma}\label{lemmaghjffggssddgghhmk4563025xxhjklojjkkk}
Let $\alpha>0$ and $p=4\alpha+\frac{2}{3}$.
Then there exists $C>0$ independent of $\varepsilon$ such that the solution of \dref{1.1fghyuisda} satisfies
\begin{equation}
\begin{array}{rl}
&\disp{\int_{\Omega} n_{\varepsilon}^{p}+\int_{\Omega}   |\nabla c_{\varepsilon}|^2+\int_{\Omega}  | {u_{\varepsilon}}|^2\leq C~~~\mbox{for all}~~ t\in (0, T_{max,\varepsilon}).}\\
\end{array}
\label{czfvgb2.5ghhjuyuccvviihjj}
\end{equation}
Moreover, for $T\in(0, T_{max,\varepsilon})$, it holds that
one can find a constant $C > 0$ independent of $\varepsilon$ such that
\begin{equation}
\begin{array}{rl}
&\disp{\int_{0}^T\left[\int_{\Omega}  |\nabla {u_{\varepsilon}}|^2+\int_{\Omega}|\nabla c_{\varepsilon}|^4 +\|\nabla n_{\varepsilon}^{\frac{p}{2}}\|^2_{L^2(\Omega)}+\int_{\Omega} |\Delta c_{\varepsilon}|^2\right]\leq C(T+1).}\\
\end{array}
\label{bnmbncz2.5ghhjuyuivvbnnihjj}
\end{equation}
\end{lemma}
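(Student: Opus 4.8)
The plan is to couple a functional of the form $y_\varepsilon(t):=\int_\Omega n_\varepsilon^{p}+a_1\int_\Omega|\nabla c_\varepsilon|^2+b_1\int_\Omega|u_\varepsilon|^2$ (with $p=4\alpha+\tfrac23$, and the logarithmic variant $\int_\Omega n_\varepsilon\ln n_\varepsilon$ exactly in the borderline case $p=1$, i.e. $\alpha=\tfrac1{12}$) and to show it satisfies a differential inequality of the type $y_\varepsilon'(t)+\delta\bigl(\|\nabla n_\varepsilon^{p/2}\|_{L^2}^2+\int_\Omega|\Delta c_\varepsilon|^2+\int_\Omega|\nabla u_\varepsilon|^2+\int_\Omega|\nabla c_\varepsilon|^4\bigr)\le C(1+y_\varepsilon)$, which upon integration yields both \dref{czfvgb2.5ghhjuyuccvviihjj} and \dref{bnmbncz2.5ghhjuyuivvbnnihjj}. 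I would first test the $n_\varepsilon$-equation by $p\,n_\varepsilon^{p-1}$: the diffusion term produces $-\tfrac{4(p-1)}{p}\|\nabla n_\varepsilon^{p/2}\|_{L^2}^2$, the cross-diffusion term is estimated using $|S_\varepsilon|\le C_S(1+n_\varepsilon)^{-\alpha}$ so that the effective coefficient in front of $n_\varepsilon^{p-1}|\nabla n_\varepsilon||\nabla c_\varepsilon|$ behaves like $n_\varepsilon^{p-1-\alpha}$, and after Young's inequality one is left with a term of the form $C\int_\Omega n_\varepsilon^{p-2\alpha}|\nabla c_\varepsilon|^2$ plus a controllable fraction of $\|\nabla n_\varepsilon^{p/2}\|_{L^2}^2$; the absorption term $-p\int_\Omega n_\varepsilon^{p}m_\varepsilon$ is simply discarded by nonnegativity. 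Next I test the $c_\varepsilon$-equation by $-\Delta c_\varepsilon$ as in \dref{hhxxcdfvvjjczhghhhjj2.5}, obtaining $\tfrac12\tfrac{d}{dt}\|\nabla c_\varepsilon\|_{L^2}^2+\int_\Omega|\Delta c_\varepsilon|^2+\int_\Omega|\nabla c_\varepsilon|^2$ against $\int_\Omega m_\varepsilon^2+\int_\Omega|\nabla c_\varepsilon\cdot\nabla u_\varepsilon\,c_\varepsilon|$-type terms; here one uses $\|c_\varepsilon\|_{L^\infty}\le\lambda$ from \dref{ddfgczhhhh2.5ghju48cfg924ghyuji} and the pointwise identity $\int_\Omega\Delta c_\varepsilon\,u_\varepsilon\cdot\nabla c_\varepsilon=-\tfrac12\int_\Omega|\nabla c_\varepsilon|^2\nabla\cdot u_\varepsilon-\int_\Omega(\nabla c_\varepsilon\cdot\nabla)u_\varepsilon\cdot\nabla c_\varepsilon=-\int_\Omega(\nabla c_\varepsilon\cdot\nabla)u_\varepsilon\cdot\nabla c_\varepsilon$, which is absorbed via $\|\nabla c_\varepsilon\|_{L^4}^2\|\nabla u_\varepsilon\|_{L^2}$ into a small multiple of $\int_\Omega|\nabla u_\varepsilon|^2$ plus $C\|\nabla c_\varepsilon\|_{L^4}^4$. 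Finally I test the Navier–Stokes equation by $u_\varepsilon$: by $\nabla\cdot u_\varepsilon=0$ and $\nabla\cdot Y_\varepsilon u_\varepsilon=0$ the convective term $\kappa\int_\Omega(Y_\varepsilon u_\varepsilon\cdot\nabla)u_\varepsilon\cdot u_\varepsilon$ vanishes, leaving $\tfrac12\tfrac{d}{dt}\|u_\varepsilon\|_{L^2}^2+\int_\Omega|\nabla u_\varepsilon|^2=\int_\Omega(n_\varepsilon+m_\varepsilon)u_\varepsilon\cdot\nabla\phi$, bounded by $C\|n_\varepsilon\|_{L^{6/5}}^2+C\|m_\varepsilon\|_{L^2}^2+\tfrac12\int_\Omega|\nabla u_\varepsilon|^2$ using Poincaré on $u_\varepsilon$, and $\|n_\varepsilon\|_{L^{6/5}}$ is dominated by $\|n_\varepsilon\|_{L^1}+\|n_\varepsilon\|_{L^p}$.

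The heart of the matter is to tie these three estimates together by controlling the genuinely coupled terms $\int_\Omega n_\varepsilon^{p-2\alpha}|\nabla c_\varepsilon|^2$ and $\|\nabla c_\varepsilon\|_{L^4}^4$ and the low-order $L^p$-norm of $n_\varepsilon$ using only the dissipation on the right-hand side, i.e.\ $\|\nabla n_\varepsilon^{p/2}\|_{L^2}^2$, $\int_\Omega|\Delta c_\varepsilon|^2$ and $\int_\Omega|\nabla u_\varepsilon|^2$ together with the a priori bounds $\int_\Omega n_\varepsilon\le\lambda$, $\|c_\varepsilon\|_{L^\infty}\le\lambda$, $\int_0^t\int_\Omega|\nabla m_\varepsilon|^2\le\lambda$ from Lemma \ref{fvfgfflemma45}. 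For the first term I would invoke the Gagliardo–Nirenberg inequality: writing $n_\varepsilon^{p-2\alpha}=\bigl(n_\varepsilon^{p/2}\bigr)^{2-4\alpha/p}$ and using $\|\nabla c_\varepsilon\|_{L^4}^2\le\|\Delta c_\varepsilon\|_{L^2}^{3/2}\|c_\varepsilon\|_{L^\infty}^{1/2}+\text{l.o.t.}$ (or directly estimating $\int_\Omega n_\varepsilon^{p-2\alpha}|\nabla c_\varepsilon|^2$ by Hölder against $\|n_\varepsilon^{p/2}\|_{L^{q_1}}^{\theta}$ and $\|\nabla c_\varepsilon\|_{L^{q_2}}^2$), one checks that the precise choice $p=4\alpha+\tfrac23$ is exactly what makes the resulting exponents of $\|\nabla n_\varepsilon^{p/2}\|_{L^2}^2$ and $\|\Delta c_\varepsilon\|_{L^2}^2$ sum to strictly less than $1$ in the relevant Young's inequality, so that both can be absorbed; this is where $\alpha>0$ enters decisively, since at $\alpha=0$ the exponents become critical and no absorption is possible (this is the content of Remark (ii)). For the $L^4$-term on $\nabla c_\varepsilon$ one uses $\|\nabla c_\varepsilon\|_{L^4}^4\le C\|\Delta c_\varepsilon\|_{L^2}^2\|\nabla c_\varepsilon\|_{L^2}^2+C\|\nabla c_\varepsilon\|_{L^2}^4$ (again Gagliardo–Nirenberg together with the elliptic fact that $\|D^2 c_\varepsilon\|_{L^2}\lesssim\|\Delta c_\varepsilon\|_{L^2}+\|\nabla c_\varepsilon\|_{L^2}$ under Neumann conditions), and absorbs the leading part provided the coefficient $a_1$ is chosen small enough relative to the $\tfrac12$ in front of $\int_\Omega|\Delta c_\varepsilon|^2$.

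After these absorptions, the surviving lower-order terms are all of the form $C(1+\int_\Omega|\nabla c_\varepsilon|^2)^{\text{(some power $\le1$)}}+C\int_\Omega m_\varepsilon^2+C$, and since $\int_\Omega m_\varepsilon^2$ is pointwise bounded by $\lambda$ via \dref{ddczhjjjj2.5ghju48cfg9ssdd24} and $\int_0^t\int_\Omega|\nabla m_\varepsilon|^2\le\lambda$, one arrives at $y_\varepsilon'(t)+\delta D_\varepsilon(t)\le C\bigl(1+y_\varepsilon(t)\bigr)$ where $D_\varepsilon$ is the sum of the four dissipation integrals in \dref{bnmbncz2.5ghhjuyuivvbnnihjj}. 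A Grönwall argument on $[0,T_{max,\varepsilon})$ — noting that all constants $C,\delta$ depend only on $\lambda$, $C_S$, $\alpha$, $\phi$, $\Omega$ and the initial data, \emph{not} on $\varepsilon$ nor on $\kappa$ (which has dropped out of the energy identity) — gives the pointwise bound \dref{czfvgb2.5ghhjuyuccvviihjj}, and a subsequent time-integration of the same inequality over $[0,T]$ yields \dref{bnmbncz2.5ghhjuyuivvbnnihjj}. The one technical point requiring care is the borderline case $\alpha=\tfrac1{12}$ where $p=1$ and $\int_\Omega n_\varepsilon^{p}\ln n_\varepsilon^{p}$ is degenerate as a coercive functional; there I would replace $\int_\Omega n_\varepsilon^p$ by $\int_\Omega n_\varepsilon\ln n_\varepsilon$, test the $n_\varepsilon$-equation by $\ln n_\varepsilon+1$, which produces the dissipation $\int_\Omega\frac{|\nabla n_\varepsilon|^2}{n_\varepsilon}=4\|\nabla\sqrt{n_\varepsilon}\|_{L^2}^2$, and run the identical Gagliardo–Nirenberg bookkeeping with $p=1$; the exponent count works out the same way because $\alpha=\tfrac1{12}>0$ still gives strict subcriticality. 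I expect the main obstacle to be precisely the exponent arithmetic in the Gagliardo–Nirenberg/Young step for $\int_\Omega n_\varepsilon^{p-2\alpha}|\nabla c_\varepsilon|^2$ — verifying that $p=4\alpha+\tfrac23$ is the sharp choice balancing the two highest-order dissipation terms, and that the three testing procedures can be added with positive weights $a_1,b_1$ so that every borderline term lands strictly inside the absorbable regime.
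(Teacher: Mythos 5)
Your overall strategy coincides with the paper's: test the first equation with $n_\varepsilon^{p-1}$, the second with $-\Delta c_\varepsilon$, the fourth with $u_\varepsilon$, use Gagliardo--Nirenberg with the mass bound to absorb $\int_\Omega n_\varepsilon^{p-2\alpha}|\nabla c_\varepsilon|^2$ and $\|\nabla c_\varepsilon\|_{L^4}^4$ into the dissipation, and close with a weighted linear combination and time integration. There is, however, one concrete gap in the range $0<\alpha<\frac{1}{12}$, where $p=4\alpha+\frac23\in(\frac23,1)$. Testing the $n_\varepsilon$-equation by $p\,n_\varepsilon^{p-1}$ produces the diffusion contribution $-\frac{4(p-1)}{p}\|\nabla n_\varepsilon^{p/2}\|_{L^2}^2$, which for $p<1$ has the \emph{wrong} sign: it is a production term, not a dissipation term, so your claimed inequality $y_\varepsilon'+\delta\,D_\varepsilon\le C(1+y_\varepsilon)$ with $y_\varepsilon=\int_\Omega n_\varepsilon^p+a_1\int_\Omega|\nabla c_\varepsilon|^2+b_1\int_\Omega|u_\varepsilon|^2$ cannot hold as stated, and there is no dissipation available to absorb the cross-diffusion term. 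You only single out $p=1$ (i.e.\ $\alpha=\frac1{12}$) for special treatment, which leaves the whole interval $0<\alpha<\frac1{12}$ uncovered. The paper's fix is the $\mathrm{sign}(p-1)$ factor in \dref{55hhjjcffghhhjkkllz2.5}: for $p<1$ one tracks $-\frac1p\int_\Omega n_\varepsilon^p$ instead (which is bounded below, since $\int_\Omega n_\varepsilon^p\le|\Omega|^{1-p}(\int_\Omega n_\varepsilon)^p$ is automatically bounded by H\"older and mass conservation), thereby restoring the genuine dissipation $\frac{4|p-1|}{p^2}\|\nabla n_\varepsilon^{p/2}\|_{L^2}^2$ on the left; the pointwise bound on $\int_\Omega n_\varepsilon^p$ is then free and only the space-time gradient bound carries content. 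Note also that after this sign flip the absorption term $-\int_\Omega n_\varepsilon^{p}m_\varepsilon$ reappears with a plus sign and can no longer be "discarded by nonnegativity''; it must be estimated, which the paper does using $\|m_\varepsilon\|_{L^\infty(\Omega)}\le\lambda$ and the mass bound.

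A smaller imprecision: the choice $p=4\alpha+\frac23$ does not make the Gagliardo--Nirenberg/Young exponents "sum to strictly less than $1$''. It is exactly the critical choice: with $p=4\alpha+\frac23$ one has $\frac{2(6p-12\alpha-3)}{3p-1}=2$ in \dref{55hherrrrjjssssssddddcz2.5}, so $\int_\Omega n_\varepsilon^{2p-4\alpha}$ is controlled by the \emph{full} power $\|\nabla n_\varepsilon^{p/2}\|_{L^2}^2$, and absorption works only because the Young prefactor in \dref{55hherrrrjjssdcz2.5} is chosen to be a small fraction of the available dissipation coefficient $\frac{4|p-1|}{p^2}$. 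This is harmless for the argument but matters if one tries to explain why $\alpha=0$ fails, which is a different mechanism from the one you describe.
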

\begin{proof}
Let $p=4\alpha+\frac{2}{3}$. 
%
We first obtain from $\nabla\cdot u_\varepsilon=0$ in
 $\Omega\times (0, T_{max,\varepsilon})$ and straightforward calculations
that
\begin{equation}
\begin{array}{rl}
&\disp{sign(p-1)\frac{1}{{p }}\frac{d}{dt}\| n_{\varepsilon} \|^{{{p }}}_{L^{{p }}(\Omega)}+sign(p-1)
 \frac{4({p}-1)}{p^2}\|n_{\varepsilon}^{\frac{p}{2}}\|^2_{L^2(\Omega)}}\\
=&\disp{-sign(p-1)
\int_{\Omega}   n_{\varepsilon}^{{p}-1}\nabla\cdot(n_{\varepsilon}\frac{1}{(1+\varepsilon n_{\varepsilon})}S_\varepsilon(x, n_{\varepsilon}, c_{\varepsilon})\cdot\nabla c_{\varepsilon}-  sign(p-1)  \int_{\Omega}n_{\varepsilon}^{{p}-1}m_{\varepsilon}}\\
\leq&\disp{ sign(p-1)({p}-1)
\int_{\Omega}  n_{\varepsilon}^{{p}-1}\frac{1}{(1+\varepsilon n_{\varepsilon})}|S_\varepsilon(x, n_{\varepsilon}, c_{\varepsilon})||\nabla n_{\varepsilon}||\nabla c_{\varepsilon}|- sign(p-1)  \int_{\Omega}n_{\varepsilon}^{{p}-1}m_{\varepsilon}}
\end{array}
\label{55hhjjcffghhhjkkllz2.5}
\end{equation}
for all $t\in(0, T_{max,\varepsilon}).$
  Therefore,  
 in light of \dref{x1.73142vghf48gg}, with the help of  the Young inequality, we can estimate the right of \dref{55hhjjcffghhhjkkllz2.5} by following
\begin{equation}
\begin{array}{rl}
&\disp{sign(p-1) ({p}-1)
\int_{\Omega}  n_{\varepsilon}^{{p}-1}\frac{1}{(1+\varepsilon n_{\varepsilon})}|S_\varepsilon(x, n_{\varepsilon}, c_{\varepsilon})||\nabla n_{\varepsilon}||\nabla c_{\varepsilon}|}\\
\leq&\disp{ |{p}-1|
\int_{\Omega}  n_{\varepsilon}^{{p}-1}C_S(1 + n_{\varepsilon})^{-\alpha}|\nabla n_{\varepsilon}||\nabla c_{\varepsilon}|}\\
\leq&\disp{ \frac{|{p}-1|}{2}\int_{\Omega}  n_{\varepsilon}^{{p}-2} |\nabla n_{\varepsilon}|^2+\frac{|{p}-1|}{2}C_S^2\int_{\Omega}  n_{\varepsilon}^{{p}}(1 + n_{\varepsilon})^{-2\alpha}|\nabla c_{\varepsilon}|^2}\\
\leq&\disp{ \frac{|{p}-1|}{2}\int_{\Omega}  n_{\varepsilon}^{{p}-2} |\nabla n_{\varepsilon}|^2+\frac{|{p}-1|}{2}C_S^2\int_{\Omega}  n_{\varepsilon}^{{p}-2\alpha}|\nabla c_{\varepsilon}|^2}\\
=&\disp{ \frac{2|{p}-1|}{p^2}\|n_{\varepsilon}^{\frac{p}{2}}\|^2_{L^2(\Omega)}+\frac{|{p}-1|}{2}C_S^2\int_{\Omega}  n_{\varepsilon}^{{p}-2\alpha}|\nabla c_{\varepsilon}|^2~~\mbox{for all}~~ t\in(0, T_{max,\varepsilon})}\\
\end{array}
\label{55hhjjcz2.5}
\end{equation}
by using  the fact that
$  (1 + n_{\varepsilon})^{-2\alpha}\leq n_{\varepsilon}^{-2\alpha}$ for all
$\varepsilon\geq0,$
$n_{\varepsilon}$ and $\alpha\geq0$. In the following we will estimate the term $\frac{|p-1|}{2}C_S^2\int_{\Omega}  n_{\varepsilon}^{{p}-2\alpha}|\nabla c_{\varepsilon}|^2$
in the right hand side of \dref{55hhjjcz2.5}.  To this end,
we firstly  invoke the Gagliardo-Nirenberg inequality again to obtain $C_1> 0$ and $C_2>0$ such
that
\begin{equation}
\begin{array}{rl}
 \disp\int_{\Omega}  n_{\varepsilon}^{2{p}-4\alpha}=&\disp{ \|n_{\varepsilon}^{\frac{p}{2}}\|^{\frac{4(p-2\alpha)}{p}}_{L^\frac{4(p-2\alpha)}{p}(\Omega)}}\\
 \leq&\disp{C_1 \|\nabla n_{\varepsilon}^{\frac{p}{2}}\|^{\frac{2(6p-12\alpha-3)}{3p-1}}_{L^2(\Omega)}\| n_{\varepsilon}^{\frac{p}{2}}\|^{\frac{4(p-2\alpha)}{p}-\frac{2(6p-12\alpha-3)}{3p-1}}_{L^{\frac{2}{p}}(\Omega)}+C_1 \|n_{\varepsilon}^{\frac{p}{2}}\|^{\frac{4(p-2\alpha)}{p}}_{L^\frac{2}{p}(\Omega)}}\\
  \leq&\disp{C_2( \|\nabla n_{\varepsilon}^{\frac{p}{2}}\|^{\frac{2(6p-12\alpha-3)}{3p-1}}_{L^2(\Omega)}+1)}\\
  =&\disp{C_2( \|\nabla n_{\varepsilon}^{\frac{p}{2}}\|^{2}_{L^2(\Omega)}+1)}\\
\end{array}
\label{55hherrrrjjssssssddddcz2.5}
\end{equation}
by using \dref{ddfgczhhhh2.5ghju48cfg924ghyuji} and $p=4\alpha+\frac{2}{3}$.
Next,
recalling the Young inequality, 
\begin{equation}
\begin{array}{rl}
&\disp{ \frac{|p-1|}{2}C_S^2\int_{\Omega}  n_{\varepsilon}^{{p}-2\alpha}|\nabla c_{\varepsilon}|^2\leq\frac{1}{C_2}\frac{|{p}-1|}{2p^2}\int_{\Omega}  n_{\varepsilon}^{2{p}-4\alpha} + C_3\int_{\Omega}|\nabla c_{\varepsilon}|^4~~\mbox{for all}~~ t\in(0, T_{max,\varepsilon})}\\
\end{array}
\label{55hherrrrjjssdcz2.5}
\end{equation}
with $C_3=\disp\frac{p^2C_2|p-1|C_S^4}{8},$ where $C_2$ is the same as \dref{55hherrrrjjssssssddddcz2.5}.
Inserting  \dref{55hherrrrjjssdcz2.5} into \dref{55hhjjcz2.5}, we may derive that
\begin{equation}
\begin{array}{rl}
&\disp{sign(p-1) ({p}-1)
\int_{\Omega}  n_{\varepsilon}^{{p}-1}\frac{1}{(1+\varepsilon n_{\varepsilon})}|S_\varepsilon(x, n_{\varepsilon}, c_{\varepsilon})||\nabla n_{\varepsilon}||\nabla c_{\varepsilon}|}\\
\leq&\disp{  \frac{2|{p}-1|}{p^2}\|n_{\varepsilon}^{\frac{p}{2}}\|^2_{L^2(\Omega)}+\frac{1}{C_2}\frac{|{p}-1|}{2p^2}\int_{\Omega}  n_{\varepsilon}^{2{p}-4\alpha} + C_3\int_{\Omega}|\nabla c_{\varepsilon}|^4~~\mbox{for all}~~ t\in(0, T_{max,\varepsilon}).}\\
\end{array}
\label{55hhjjcz2.ssdfgh5}
\end{equation}
Next, using the Gagliardo-Nirenberg inequality and \dref{ddfgczhhhh2.5ghju48cfg924ghyuji}, one can get
\begin{equation}
\begin{array}{rl}
\disp \int_{\Omega}|\nabla c_{\varepsilon}|^4=\|\nabla c_{\varepsilon}\|_{L^{4}(\Omega)}^4\leq&\disp{\lambda_0\|\Delta c_{\varepsilon}\|_{L^{2}(\Omega)}^2\| c_{\varepsilon}\|_{L^{\infty}(\Omega)}^2+\lambda_0\|c_{\varepsilon}\|_{L^{\infty}(\Omega)}^4}\\
\leq&\disp{\lambda_2\|\Delta c_{\varepsilon}\|_{L^{2}(\Omega)}^2+\lambda_1~~\mbox{for all}~~ t\in(0, T_{max,\varepsilon})}\\
\end{array}
\label{hhxxcdfvvdfghhhsssffjjddddcz2.5}
\end{equation}
for some positive constants $\lambda_0,\lambda_1$ and $\lambda_2$ independent of $\varepsilon.$
%
Collecting \dref{55hhjjcffghhhjkkllz2.5}--\dref{55hherrrrjjssssssddddcz2.5} and
\dref{55hhjjcz2.ssdfgh5}--\dref{hhxxcdfvvdfghhhsssffjjddddcz2.5}, we conclude that there exist positive constants $C_4$ and $C_5$ such that
\begin{equation}
\begin{array}{rl}
&\disp{ sign(p-1)\frac{1}{{p }}\frac{d}{dt}\| n_{\varepsilon} \|^{{{p }}}_{L^{{p }}(\Omega)}+
 (sign(p-1)\frac{4({p}-1)}{p^2}-\frac{5|{p}-1|}{2p^2})\|\nabla n_{\varepsilon}^{\frac{p}{2}}\|^2_{L^2(\Omega)}}\\
 \leq&\disp{ C_4\|\Delta c_{\varepsilon}\|_{L^{2}(\Omega)}^2+C_5- sign(p-1)  \int_{\Omega}n_{\varepsilon}^{{p}-1}m_{\varepsilon}~~\mbox{for all}~~ t\in(0, T_{max,\varepsilon}).}
\end{array}
\label{55hhjjcffghhhjkkllz2dddff.5}
\end{equation}

To track the time evolution of $c_\varepsilon$,
taking ${-\Delta c_{\varepsilon}}$ as the test function for the second  equation of \dref{1.1fghyuisda} and using \dref{ddfgczhhhh2.5ghju48cfg924ghyuji}, we have 
\begin{equation}
\begin{array}{rl}
&\disp\frac{1}{{2}}\disp\frac{d}{dt}\|\nabla{c_{\varepsilon}}\|^{{{2}}}_{L^{{2}}(\Omega)}+
\int_{\Omega} |\Delta c_{\varepsilon}|^2+ \int_{\Omega} |\nabla c_{\varepsilon}|^2\\
=&\disp{-\int_{\Omega} m_{\varepsilon}\Delta c_{\varepsilon}+\int_{\Omega} \Delta c_{\varepsilon}u_{\varepsilon}\cdot\nabla c_{\varepsilon}}\\
=&\disp{-\int_{\Omega} m_{\varepsilon}\Delta c_{\varepsilon}-\int_{\Omega} \nabla c_{\varepsilon}\cdot\nabla ( u_{\varepsilon}\cdot\nabla c_{\varepsilon})}\\
=&\disp{-\int_{\Omega} m_{\varepsilon}\Delta c_{\varepsilon}-\int_{\Omega} \nabla c_{\varepsilon} \cdot(\nabla u_{\varepsilon}\cdot\nabla c_{\varepsilon})-\int_{\Omega} \nabla c_{\varepsilon} \cdot(D^2 \cdot u_{\varepsilon}),}\\
\end{array}
\label{hhxxcdfvvjjcz2.5}
\end{equation}
which together with the fact that
$$-\int_{\Omega} \nabla c_{\varepsilon} \cdot(D^2 \cdot u_{\varepsilon})=-\frac{1}{2}\int_{\Omega} u_{\varepsilon} \cdot\nabla|\nabla c_{\varepsilon}|^2=0$$
implies that
\begin{equation}
\begin{array}{rl}
&\disp\frac{1}{{2}}\disp\frac{d}{dt}\|\nabla{c_{\varepsilon}}\|^{{{2}}}_{L^{{2}}(\Omega)}+
\int_{\Omega} |\Delta c_{\varepsilon}|^2+ \int_{\Omega} |\nabla c_{\varepsilon}|^2\\
\leq&\disp{\frac{1}{4}\int_{\Omega} |\Delta c_{\varepsilon}|^2+\int_{\Omega} |m_{\varepsilon}|^2+\|\nabla c_{\varepsilon}\|_{L^4(\Omega)}^2 \|\nabla u_{\varepsilon}\|_{L^2(\Omega)}}\\
\leq&\disp{\frac{1}{4}\int_{\Omega} |\Delta c_{\varepsilon}|^2+\int_{\Omega} |m_{\varepsilon}|^2+\frac{1}{4\lambda_2}\|\nabla c_{\varepsilon}\|_{L^4(\Omega)}^4 +\lambda_2\|\nabla u_{\varepsilon}\|_{L^2(\Omega)}^2,}\\
\end{array}
\label{hhxxcdfvvjjczdddf2.5}
\end{equation}
where $\lambda_2$ is the same as \dref{hhxxcdfvvdfghhhsssffjjddddcz2.5}.
This together with \dref{hhxxcdfvvdfghhhsssffjjddddcz2.5} yields to
\begin{equation}
\begin{array}{rl}
\disp\frac{1}{{2}}\disp\frac{d}{dt}\|\nabla{c_{\varepsilon}}\|^{{{2}}}_{L^{{2}}(\Omega)}+
\frac{1}{2}\int_{\Omega} |\Delta c_{\varepsilon}|^2+ \int_{\Omega} |\nabla c_{\varepsilon}|^2\leq&\disp{\lambda_2\|\nabla u_{\varepsilon}\|_{L^2(\Omega)}^2+C_6}\\
\end{array}
\label{hhxxcdfvvjjcssfggzdddf2.5}
\end{equation}
by using \dref{ddczhjjjj2.5ghju48cfg9ssdd24}.
Take an evident linear combination of the inequalities provided by \dref{55hhjjcffghhhjkkllz2dddff.5} and \dref{hhxxcdfvvjjcssfggzdddf2.5}, we conclude
\begin{equation}
\begin{array}{rl}
&\disp{sign(p-1) \frac{1}{{p }}\frac{d}{dt}\| n_{\varepsilon} \|^{{{p }}}_{L^{{p }}(\Omega)}+2C_4\disp\frac{d}{dt}\|\nabla{c_{\varepsilon}}\|^{{{2}}}_{L^{{2}}(\Omega)}}\\
&\disp{+(sign(p-1)\frac{4({p}-1)}{p^2}-\frac{5|{p}-1|}{2p^2})\|\nabla n_{\varepsilon}^{\frac{p}{2}}\|^2_{L^2(\Omega)}}\\
 &+\disp{C_4\int_{\Omega} |\Delta c_{\varepsilon}|^2+ 2C_4\int_{\Omega} |\nabla c_{\varepsilon}|^2}\\
 \leq &{\kappa_0\|\nabla u_{\varepsilon}\|_{L^2(\Omega)}^2+C_7- sign(p-1)  \disp\int_{\Omega}n_{\varepsilon}^{{p}-1}m_{\varepsilon}~~\mbox{for all}~~ t\in(0, T_{max,\varepsilon}),}
\end{array}
\label{55hhjjcffghhhjkkllzssss2dddffssddd.5}
\end{equation}
where \begin{equation}
\kappa_0=2\lambda_2C_4,C_7=2C_6C_4+C_5.
\label{55hhjjcffghhhjkksssssdddllzssss2dddfssddfssddd.5}
\end{equation}
%

Now, multiplying the
fourth equation of \dref{1.1fghyuisda} by $u_\varepsilon$, integrating by parts and using $\nabla\cdot u_{\varepsilon}=0$
\begin{equation}
\begin{array}{rl}
\disp{\frac{1}{2}\frac{d}{dt}\int_{\Omega}{|u_{\varepsilon}|^2}+\int_{\Omega}{|\nabla u_{\varepsilon}|^2}}= &\disp{ \int_{\Omega}(n_{\varepsilon}+m_{\varepsilon})u_{\varepsilon}\cdot\nabla \phi~~\mbox{for all}~~ t\in(0, T_{max,\varepsilon}).}\\
\end{array}
\label{ddddfgcz2.5ghju48cfg924ghyuji}
\end{equation}
Noticing the fact $W^{1,2}(\Omega)\hookrightarrow L^6(\Omega)$  in the 3D case and making
use of the H\"{o}lder inequality  and the Young inequality we can estimate the second term in
the right hand of \dref{ddddfgcz2.5ghju48cfg924ghyuji} as
\begin{equation}
\begin{array}{rl}
\disp\int_{\Omega}(n_{\varepsilon}+m_{\varepsilon})u_{\varepsilon}\cdot\nabla \phi\leq&\disp{\|\nabla \phi\|_{L^\infty(\Omega)}\| n_{\varepsilon} \|_{L^{\frac{6}{5}}(\Omega)}\| u_{\varepsilon}\|_{L^{6}(\Omega)}+\|\nabla \phi\|_{L^\infty(\Omega)}\| m_{\varepsilon} \|_{L^{\frac{6}{5}}(\Omega)}\| u_{\varepsilon}\|_{L^{6}(\Omega)}}\\
\leq&\disp{C_{8}\|\nabla \phi\|_{L^\infty(\Omega)}(\| n_{\varepsilon} \|_{L^{\frac{6}{5}}(\Omega)}+\|m_{\varepsilon} \|_{L^{\frac{6}{5}}(\Omega)})\|\nabla u_{\varepsilon}\|_{L^{2}(\Omega)}}\\
\leq&\disp{\frac{1}{2}\|\nabla u_{\varepsilon}\|_{L^{2}(\Omega)}^2+C_{9}(\| n_{\varepsilon} \|_{L^{\frac{6}{5}}(\Omega)}^2+1)~~\mbox{for all}~~ t\in(0, T_{max,\varepsilon})}\\
\end{array}
\label{dddddddddfgcz2.5ghju48cfg924ghyuji}
\end{equation}
by using \dref{ddfgczhhhh2.5ghju48cfg924ghyuji} and \dref{dd1.1fghyuisdakkkllljjjkk}.
Next,  the Young inequality along with the assumed boundedness of $\phi$ (see \dref{dd1.1fghyuisdakkkllljjjkk})  as well as the Gagliardo--Nirenberg inequality and \dref{ddfgczhhhh2.5ghju48cfg924ghyuji}  yields
%
\begin{equation}
\begin{array}{rl}
\disp{\int_{\Omega}(n_{\varepsilon}+m_{\varepsilon})u_{\varepsilon}\cdot\nabla \phi}\leq&\disp{\frac{1}{2}\|\nabla u_{\varepsilon}\|_{L^{2}(\Omega)}^2+C_{9}
\| n_{\varepsilon}^{\frac{p}{2} } \|_{L^{\frac{12}{5p}}(\Omega)}^{\frac{4}{p}}+C_{9}}\\
\leq &\disp{\frac{1}{2}\|\nabla u_{\varepsilon}\|_{L^{2}(\Omega)}^2+C_{10}\|\nabla n_{\varepsilon}^{\frac{p}{2} }\|_{L^{2}(\Omega)}^{\frac{2}{3p-1}}\|  n_{\varepsilon}^{\frac{p}{2} }\|_{L^{\frac{2}{p }}(\Omega)}^{\frac{4}{p}-\frac{2}{3p-1}}+C_{10}\| n_{\varepsilon}^{\frac{p}{2} } \|_{L^{\frac{2}{p}}(\Omega)}^{\frac{4}{p}}+C_{9}}\\
\leq &\disp{\frac{1}{2}\|\nabla u_{\varepsilon}\|_{L^{2}(\Omega)}^2+\frac{|{p}-1|}{2p^2}\frac{1}{4\kappa_0}\|\nabla  n_{\varepsilon}^{\frac{p}{2} }\|_{L^{2}(\Omega)}^{2}+C_{11}~~\mbox{for all}~~ t\in(0, T_{max,\varepsilon}),}\\
\end{array}
\label{ddddfgcxccdd2.5ghju4cvvbbttthdfff8cfg924ghyuji}
\end{equation}
where $\kappa_0$ is given by \dref{55hhjjcffghhhjkksssssdddllzssss2dddfssddfssddd.5}, $C_{9}, C_{10}$ and $C_{11}$ are positive constants which are independent of $\varepsilon.$
Here the last inequality we have used the fact that
$$\frac{2}{3p-1}>2~~\mbox{by}~~p=4\alpha+\frac{2}{3}>\frac{2}{3}.$$
Now, substituting  \dref{ddddfgcxccdd2.5ghju4cvvbbttthdfff8cfg924ghyuji} into \dref{ddddfgcz2.5ghju48cfg924ghyuji}, one has
\begin{equation}
\begin{array}{rl}
\disp{\frac{1}{2}\frac{d}{dt}\int_{\Omega}{|u_{\varepsilon}|^2}+\frac{1}{2}\|\nabla u_{\varepsilon}\|_{L^{2}(\Omega)}^2}\leq &\disp{ \frac{|{p}-1|}{2p^2}\frac{1}{4\kappa_0}\|\nabla  n_{\varepsilon}^{\frac{p}{2} }\|_{L^{2}(\Omega)}^{2}+C_{12}~~\mbox{for all}~~ t\in(0, T_{max,\varepsilon}),}\\
\end{array}
\label{ddddfgcz2.5ghju48cfg924ssddddghyuji}
\end{equation}
so that,  which together with \dref{55hhjjcffghhhjkkllzssss2dddffssddd.5} implies that
\begin{equation}
\begin{array}{rl}
&\disp{2\kappa_0\frac{d}{dt}\int_{\Omega}{|u_{\varepsilon}|^2}+sign(p-1)\frac{1}{{p }}\frac{d}{dt}\| n_{\varepsilon} \|^{{{p }}}_{L^{{p }}(\Omega)}+2C_4\disp\frac{d}{dt}\|\nabla{c_{\varepsilon}}\|^{{{2}}}_{L^{{2}}(\Omega)}}\\
&\disp{+\kappa_0\|\nabla u_{\varepsilon}\|_{L^{2}(\Omega)}^2+(sign(p-1)\frac{4({p}-1)}{p^2}-\frac{3|{p}-1|}{p^2})\|\nabla n_{\varepsilon}^{\frac{p}{2}}\|^2_{L^2(\Omega)}}\\
&+\disp{C_4\int_{\Omega} |\Delta c_{\varepsilon}|^2+ 2C_4\int_{\Omega} |\nabla c_{\varepsilon}|^2}\\
\leq &\disp{ C_{13}- sign(p-1)  \disp\int_{\Omega}n_{\varepsilon}^{{p}-1}m_{\varepsilon}~~\mbox{for all}~~ t\in(0, T_{max,\varepsilon}).}\\
\end{array}
\label{sdfffddddfgcz2.5ghjgghussdd48cfg924ssddddghyuji}
\end{equation}
 Case $p>1:$ Then $sign(p-1)=1>0.$
Thus, \dref{sdfffddddfgcz2.5ghjgghussdd48cfg924ssddddghyuji} implies that
\begin{equation}
\begin{array}{rl}
&\disp{2\kappa_0\frac{d}{dt}\int_{\Omega}{|u_{\varepsilon}|^2}+\frac{1}{{p }}\frac{d}{dt}\| n_{\varepsilon} \|^{{{p }}}_{L^{{p }}(\Omega)}+2C_4\disp\frac{d}{dt}\|\nabla{c_{\varepsilon}}\|^{{{2}}}_{L^{{2}}(\Omega)}}\\
&\disp{+\kappa_0\|\nabla u_{\varepsilon}\|_{L^{2}(\Omega)}^2+\frac{|{p}-1|}{p^2}\|\nabla n_{\varepsilon}^{\frac{p}{2}}\|^2_{L^2(\Omega)}}\\
&+\disp{C_4\int_{\Omega} |\Delta c_{\varepsilon}|^2+ 2C_4\int_{\Omega} |\nabla c_{\varepsilon}|^2}\\
\leq &\disp{ C_{13}~~\mbox{for all}~~ t\in(0, T_{max,\varepsilon})}\\
\end{array}
\label{ddddfgcz2.5ghjgghussdd48cfg924ssddddghyuji}
\end{equation}
by using
$$- sign(p-1)  \disp\int_{\Omega}n_{\varepsilon}^{{p}-1}m_{\varepsilon}=-  \disp\int_{\Omega}n_{\varepsilon}^{{p}-1}m_{\varepsilon}\leq0,$$
Next,
 integrating \dref{ddddfgcz2.5ghjgghussdd48cfg924ssddddghyuji} in time, we can obtain from \dref{hhxxcdfvvdfghhhsssffjjddddcz2.5} that
\begin{equation}
\begin{array}{rl}
&\disp{\int_{\Omega}   |u_{\varepsilon}|^2+\int_{\Omega}   n_{\varepsilon}^p+\int_{\Omega}   |\nabla c_{\varepsilon}|^2\leq C_{14}~~~\mbox{for all}~~ t\in (0, T_{max,\varepsilon})}\\
\end{array}
\label{czfvgb2.5ghhjuyghjjjuddfghhccvjkkklllhhjkkviihjj}
\end{equation}
and
\begin{equation}
\begin{array}{rl}
&\disp{\int_{0}^T\left[\int_{\Omega}  |\nabla {u_{\varepsilon}}|^2+\int_{\Omega}|\nabla c_{\varepsilon}|^4 +\|\nabla n_{\varepsilon}^{\frac{p}{2}}\|^2_{L^2(\Omega)}+\int_{\Omega} |\Delta c_{\varepsilon}|^2\right]\leq C_{14}(T+1)~~\mbox{for all}~~ T\in(0, T_{max,\varepsilon})}\\
\end{array}
\label{bnmbncz2.5ghhjuddfghhffghhhjjkklhddfggyhjkklluivvbnnihjj}
\end{equation}
and some positive constant $C_{14}.$
Case $p=4\alpha+\frac{2}{3}<1$:
 Then $ sign(p-1)=-1<0$, hence, in view of \dref{ddfgczhhhh2.5ghju48cfg924ghyuji},
 integrating \dref{sdfffddddfgcz2.5ghjgghussdd48cfg924ssddddghyuji} in time and employing  the H\"{o}lder inequality, we also conclude that there exists a positive constant $C_{15}$ such that
 \begin{equation}
\begin{array}{rl}
&\disp{\int_{\Omega}   |u_{\varepsilon}|^2+\int_{\Omega}   n_{\varepsilon}^p+\int_{\Omega}   |\nabla c_{\varepsilon}|^2\leq C_{15}~~~\mbox{for all}~~ t\in (0, T_{max,\varepsilon})}\\
\end{array}
\label{czfvgb2.5ghhjuyuddfghhccvhhjkkviihjj}
\end{equation}
and
\begin{equation}
\begin{array}{rl}
&\disp{\int_{0}^T\left[\int_{\Omega}  |\nabla {u_{\varepsilon}}|^2+\int_{\Omega}|\nabla c_{\varepsilon}|^4 +\|\nabla n_{\varepsilon}^{\frac{p}{2}}\|^2_{L^2(\Omega)}+\int_{\Omega} |\Delta c_{\varepsilon}|^2\right]\leq C_{15}(T+1)~~\mbox{for all}~~ T\in(0, T_{max,\varepsilon}).}\\
\end{array}
\label{bnmbncz2.5ghhjuddfghhddfggyhjkklluivvbnnihjj}
\end{equation}
Case£º $p=1:$
Using the first equation of \dref{1.1fghyuisda}, from integration by parts and applying \dref{x1.73142vghf48gg}, we derive from \dref{ddfgczhhhh2.5ghju48cfg924ghyuji} that
\begin{equation}
 \begin{array}{rl}
&\disp\frac{d}{dt}\disp\int_{\Omega} n_{\varepsilon} \ln  n_{\varepsilon}\\
 =&\disp{\int_{\Omega}n_{\varepsilon t} \ln  n_{\varepsilon}+
\int_{\Omega}n_{\varepsilon t}}\\
=&\disp{\int_{\Omega}\Delta  n_{\varepsilon}  \ln  n_{\varepsilon}-
\int_{\Omega}\ln  n_{\varepsilon}\nabla\cdot(n_{\varepsilon}\frac{1}{(1+\varepsilon n_{\varepsilon})}S_\varepsilon(x, n_{\varepsilon}, c_{\varepsilon})
\cdot\nabla c_{\varepsilon})-  \int_{\Omega}\ln  n_{\varepsilon}m_{\varepsilon}-\int_{\Omega}  n_{\varepsilon}m_{\varepsilon}}\\
\leq&\disp{-\int_{\Omega} \frac{|\nabla n_{\varepsilon}|^2}{n_{\varepsilon}}+
\int_{\Omega} C_S(1 + n_{\varepsilon})^{-\alpha}\frac{n_{\varepsilon}}{ n_{\varepsilon} }|\nabla n_{\varepsilon}||\nabla c_{\varepsilon}|+C_{16}~~~\mbox{for all}~~ t\in (0, T_{max,\varepsilon}),}\\
\end{array}\label{vgccvsckkcvvsbbbbhsvvbbsddaqwswddaassffssff3.10deerfgghhjuuloollgghhhyhh}
\end{equation}
which combined with the Young inequality implies that
\begin{equation}
 \begin{array}{rl}
\disp\frac{d}{dt}\disp\int_{\Omega} n_{\varepsilon} \ln  n_{\varepsilon} +\disp\frac{1}{2}
\disp\int_{\Omega} \frac{|\nabla n_{\varepsilon}|^2}{n_{\varepsilon}} \leq&\disp{\disp\frac{1}{2}C_S^2
\disp\int_{\Omega}\frac{n_{\varepsilon}|\nabla c_{\varepsilon}|^2}{(1 + n_{\varepsilon})^{2\alpha}}}\\
\leq&\disp{\disp\frac{1}{2}C_S^2
\disp\int_{\Omega}n_{\varepsilon}^{1-2\alpha}|\nabla c_{\varepsilon}|^2+C_{16}~~~\mbox{for all}~~ t\in (0, T_{max,\varepsilon}).}\\
\end{array}\label{vgccvsckkcvvsbbbbhsvvbbsddaqwswddaassffssff3.10defgghjjerfgghhjuuloollgghhhyhh}
\end{equation}
On the  other hand, due to  $p=1$  yields to $4\alpha+\frac{2}{3}>\frac{2}{3}$, employing almost exactly the same arguments as in the proof of \dref{hhxxcdfvvjjcz2.5}--\dref{bnmbncz2.5ghhjuddfghhffghhhjjkklhddfggyhjkklluivvbnnihjj} (the minor necessary changes
are left as an easy exercise to the reader),  we conclude the estimate
\begin{equation}
\begin{array}{rl}
&\disp{\int_{\Omega}   |u_{\varepsilon}|^2+\int_{\Omega}   n_{\varepsilon}^p+\int_{\Omega}   |\nabla c_{\varepsilon}|^2\leq C_{17}~~~\mbox{for all}~~ t\in (0, T_{max,\varepsilon})}\\
\end{array}
\label{czfvgb2.5ghhjuyuddfghhccvhjkkkkhhjkkklllkviihjj}
\end{equation}
and
\begin{equation}
\begin{array}{rl}
&\disp{\int_{0}^T\left[\int_{\Omega}  |\nabla {u_{\varepsilon}}|^2+\int_{\Omega}|\nabla c_{\varepsilon}|^4 +\|\nabla n_{\varepsilon}^{\frac{p}{2}}\|^2_{L^2(\Omega)}+\int_{\Omega} |\Delta c_{\varepsilon}|^2\right]\leq C_{17}(T+1)~~\mbox{for all}~~ T\in(0, T_{max,\varepsilon}).}\\
\end{array}
\label{bnmbncz2.5ghhjuddfghhffghhddfggyhjkkllujjkkivvbnnihjj}
\end{equation}
\end{proof}


\subsection{Further a-priori estimates}

With the help of Lemma \ref{lemmaghjffggssddgghhmk4563025xxhjklojjkkk} and  the Gagliardo--Nirenberg inequality,
we can derive the following Lemma:
\begin{lemma}\label{lemmddaghjsffggggsddgghhmk4563025xxhjklojjkkk}
Let $\alpha>0$. Then for each $T\in(0, T_{max,\varepsilon})$,
 there exists $C>0$ independent of $\varepsilon$ such that the solution of \dref{1.1fghyuisda} satisfies
\begin{equation}
\begin{array}{rl}
&\disp{\int_{0}^T\int_{\Omega}\left[|\nabla n_{\varepsilon}|^{\gamma_0}+ n_{\varepsilon}^{\frac{12\alpha+4}{3}}\right]\leq C(T+1),}\\
\end{array}
\label{bnmbncz2.ffghh5ghhjuyuivvbnnihjj}
\end{equation}
where $\gamma_0=\min\{3\alpha+1,2\}$.
\end{lemma}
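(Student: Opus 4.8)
The plan is to bootstrap from the $\varepsilon$-independent bounds of Lemma \ref{lemmaghjffggssddgghhmk4563025xxhjklojjkkk}. By that lemma we already know, with $p=4\alpha+\frac23$, that $\int_0^T\|\nabla n_\varepsilon^{p/2}\|_{L^2(\Omega)}^2\le C(T+1)$ together with $\sup_{t}\int_\Omega n_\varepsilon^{p}\le C$. The spatio-temporal $L^p$ bound on $\nabla n_\varepsilon^{p/2}$ is the workhorse: an application of the Gagliardo--Nirenberg inequality to $n_\varepsilon^{p/2}$ upgrades the time-integrated gradient bound into a time-integrated bound on $\int_\Omega n_\varepsilon^{q}$ for an exponent $q>p$, and this will produce exactly the claimed $\int_0^T\int_\Omega n_\varepsilon^{\frac{12\alpha+4}{3}}=\int_0^T\int_\Omega n_\varepsilon^{3p/2}$ (since $\tfrac32 p=6\alpha+1$, wait — recompute: $\tfrac32 p=\tfrac32(4\alpha+\tfrac23)=6\alpha+1$; the target exponent $\tfrac{12\alpha+4}{3}=4\alpha+\tfrac43=p+\tfrac23$, so the relevant boost is milder than $3p/2$ and is well within reach of the standard three-dimensional interpolation $\|w\|_{L^{10/3}(\Omega)}^{10/3}\le C\|\nabla w\|_{L^2(\Omega)}^2\|w\|_{L^2(\Omega)}^{4/3}+\ldots$ applied with $w=n_\varepsilon^{p/2}$).

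First I would write $\int_\Omega n_\varepsilon^{\frac{12\alpha+4}{3}}=\|n_\varepsilon^{p/2}\|_{L^{\theta}(\Omega)}^{\theta}$ with $\theta=\frac{2(12\alpha+4)}{3p}=\frac{2(12\alpha+4)}{3(4\alpha+\frac23)}$, check that $\theta\in(2,\frac{10}{3})$ for all $\alpha>0$, and apply the Gagliardo--Nirenberg inequality
\[
\|n_\varepsilon^{p/2}\|_{L^{\theta}(\Omega)}^{\theta}\le C\|\nabla n_\varepsilon^{p/2}\|_{L^2(\Omega)}^{\theta a}\|n_\varepsilon^{p/2}\|_{L^{2/p}(\Omega)}^{\theta(1-a)}+C\|n_\varepsilon^{p/2}\|_{L^{2/p}(\Omega)}^{\theta},
\]
where $a\in(0,1)$ is the interpolation exponent and $\|n_\varepsilon^{p/2}\|_{L^{2/p}(\Omega)}=\|n_\varepsilon\|_{L^1(\Omega)}^{p/2}$ is bounded by \dref{ddfgczhhhh2.5ghju48cfg924ghyuji}. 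The point is that $\theta a<2$, so after integrating in time and using Young's inequality the term $\|\nabla n_\varepsilon^{p/2}\|_{L^2(\Omega)}^{\theta a}$ is controlled by $\int_0^T\|\nabla n_\varepsilon^{p/2}\|_{L^2(\Omega)}^2+C(T+1)$, which is finite and $\varepsilon$-independent by \dref{bnmbncz2.5ghhjuyuivvbnnihjj}. This yields the $n_\varepsilon^{\frac{12\alpha+4}{3}}$ half of \dref{bnmbncz2.ffghh5ghhjuyuivvbnnihjj}.

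For the gradient term $\int_0^T\int_\Omega|\nabla n_\varepsilon|^{\gamma_0}$ with $\gamma_0=\min\{3\alpha+1,2\}$, I would write $|\nabla n_\varepsilon|^{\gamma_0}=\big(n_\varepsilon^{(p-2)/2}|\nabla n_\varepsilon|\big)^{\gamma_0}n_\varepsilon^{-\frac{(p-2)\gamma_0}{2}}=\Big(\tfrac{2}{p}\big|\nabla n_\varepsilon^{p/2}\big|\Big)^{\gamma_0}n_\varepsilon^{-\frac{(p-2)\gamma_0}{2}}$ and apply Hölder's inequality in space with exponents $\frac{2}{\gamma_0}$ and $\frac{2}{2-\gamma_0}$:
\[
\int_\Omega|\nabla n_\varepsilon|^{\gamma_0}\le C\Big(\int_\Omega|\nabla n_\varepsilon^{p/2}|^2\Big)^{\gamma_0/2}\Big(\int_\Omega n_\varepsilon^{-\frac{(p-2)\gamma_0}{2-\gamma_0}}\Big)^{\frac{2-\gamma_0}{2}}.
\]
One checks that the choice $\gamma_0=\min\{3\alpha+1,2\}$ is exactly what makes the negative-power exponent $\frac{(p-2)\gamma_0}{2-\gamma_0}$ small enough (indeed $\le$ the available $L^{\text{small}}$-type control, or absorbable because $p-2<0$ whenever $\alpha<\frac13$) so that $\int_\Omega n_\varepsilon^{-\frac{(p-2)\gamma_0}{2-\gamma_0}}$ is bounded by a power of $\int_\Omega n_\varepsilon^{\frac{12\alpha+4}{3}}+1$; in the regime $\alpha\ge\frac13$ one has $p\ge2$ and $\gamma_0=2$, so $|\nabla n_\varepsilon|^2=\tfrac{4}{p^2}|\nabla n_\varepsilon^{p/2}|^2 n_\varepsilon^{2-p}\le\tfrac{4}{p^2}|\nabla n_\varepsilon^{p/2}|^2$ directly since $2-p\le0$ and $n_\varepsilon\ge$ its minimum — more carefully, one keeps the factor and estimates it by Hölder against the already-established $L^{\frac{12\alpha+4}{3}}$ bound. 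Then integrate in time, insert \dref{bnmbncz2.5ghhjuyuivvbnnihjj} and the $n_\varepsilon^{\frac{12\alpha+4}{3}}$ bound just proved, and conclude via Young's inequality.

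The main obstacle I anticipate is the careful verification that the exponents line up: one must confirm $\theta a\le 2$ in the Gagliardo--Nirenberg step (this is where $\alpha>0$ rather than $\alpha\ge0$ enters, via $p>\tfrac23$) and that the negative-power integral $\int_\Omega n_\varepsilon^{-\frac{(p-2)\gamma_0}{2-\gamma_0}}$ in the gradient estimate is genuinely controlled — this is precisely why $\gamma_0$ must be capped at $3\alpha+1$ when $\alpha<\tfrac13$ and at $2$ otherwise, and getting this threshold right is the delicate bookkeeping of the proof. Everything else is routine interpolation and Young's inequality, using only the $\varepsilon$-independent estimates \dref{ddfgczhhhh2.5ghju48cfg924ghyuji}, \dref{czfvgb2.5ghhjuyuccvviihjj} and \dref{bnmbncz2.5ghhjuyuivvbnnihjj} already in hand.
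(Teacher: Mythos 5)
Your treatment of the zero-order term and of the gradient term in the regime $0<\alpha<\tfrac13$ matches the paper's proof: the Gagliardo--Nirenberg step applied to $w=n_\varepsilon^{p/2}$ with the lower norm $\|n_\varepsilon^{p/2}\|_{L^{2/p}(\Omega)}=\|n_\varepsilon\|_{L^1(\Omega)}^{p/2}$ controlled by the mass is exactly \dref{ddffbnmbnddfgcz2ddfvgbhh.htt678ddfghhhyuiihjj}, and your H\"older factorization of $|\nabla n_\varepsilon|^{\gamma_0}$ with exponents $\tfrac{2}{\gamma_0}$, $\tfrac{2}{2-\gamma_0}$ reproduces \dref{5555ddffbnmbncz2ddfvgffgtyybhh.htt678ghhjjjddfghhhyuiihjj}; the point, as you note, is that for $p<2$ the leftover factor carries the \emph{positive} exponent $-\frac{(p-2)(3\alpha+1)}{1-3\alpha}=\frac{12\alpha+4}{3}$, which is precisely the quantity bounded in the first step. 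Two small slips there: the interpolation exponent satisfies $\theta a=2$ exactly (with $a=\frac{6\alpha+1}{6\alpha+2}$), not $\theta a<2$, which is still fine --- you integrate in time directly without Young; and $\theta=\frac{12\alpha+4}{6\alpha+1}\in(2,4)$ exceeds $\tfrac{10}{3}$ for small $\alpha$, though this is harmless since only $a\in(0,1)$ is required.

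The genuine gap is the case $\alpha>\tfrac13$, where $\gamma_0=2$ and $p=4\alpha+\tfrac23>2$. Your identity $|\nabla n_\varepsilon|^2=\tfrac{4}{p^2}|\nabla n_\varepsilon^{p/2}|^2\,n_\varepsilon^{2-p}$ is correct, but the passage to $|\nabla n_\varepsilon|^2\le\tfrac{4}{p^2}|\nabla n_\varepsilon^{p/2}|^2$ requires $n_\varepsilon^{2-p}\le1$, i.e.\ $n_\varepsilon\ge1$, which is false in general: since $2-p<0$ this factor blows up where $n_\varepsilon$ is small, and no positive pointwise lower bound on $n_\varepsilon$ is available. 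The fallback you suggest --- H\"older against the already-established $L^{\frac{12\alpha+4}{3}}$ bound --- cannot repair this, because after H\"older the leftover factor is $\int_\Omega n_\varepsilon^{(2-p)r'}$ with a \emph{negative} exponent, and none of the a priori estimates control integrals of negative powers of $n_\varepsilon$. The paper closes this case by a genuinely separate argument: it tests the first equation of \dref{1.1fghyuisda} with $n_\varepsilon$ itself, uses $|S_\varepsilon(x,n_\varepsilon,c_\varepsilon)|\le C_S(1+n_\varepsilon)^{-\alpha}\le C_S n_\varepsilon^{-\alpha}$ together with $\alpha\ge\tfrac13$ and Young's inequality to bound the chemotaxis term by $\tfrac12\int_\Omega|\nabla n_\varepsilon|^2+\tfrac{1}{4C_6}\int_\Omega n_\varepsilon^{4(1-\alpha)}+C\int_\Omega|\nabla c_\varepsilon|^4$, absorbs $\int_\Omega n_\varepsilon^{4-4\alpha}$ via Gagliardo--Nirenberg and the mass bound, and then integrates in time using the space-time $L^4$ bound on $\nabla c_\varepsilon$ from \dref{bnmbncz2.5ghhjuyuivvbnnihjj}. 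This fresh $L^2$ energy estimate (which also yields $\sup_t\|n_\varepsilon\|_{L^2(\Omega)}\le C$) is the missing ingredient; it is not a pointwise or interpolation consequence of the $L^p$ information you start from.
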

\begin{proof}
 Due to \dref{ddfgczhhhh2.5ghju48cfg924ghyuji}, \dref{czfvgb2.5ghhjuyuccvviihjj}
and \dref{bnmbncz2.5ghhjuyuivvbnnihjj}, in light of the Gagliardo--Nirenberg inequality, for some $C_1$ and $C_2> 0$ which are independent of $\varepsilon$, one may verify that
\begin{equation}
\begin{array}{rl}
&\disp\int_{0}^T\disp\int_{\Omega} n_{\varepsilon}^{\frac{12\alpha+4}{3}} \\
=&\disp{\int_{0}^T\| { n_{\varepsilon}^{\frac{6\alpha+1}{3} }}\|^{{\frac{12\alpha+4}{6\alpha+1 }}}_{L^{\frac{12\alpha+4}{6\alpha+1 }}(\Omega)}}\\
\leq&\disp{C_{1}\int_{0}^T\left(\| \nabla{ n_{\varepsilon}^{\frac{6\alpha+1}{3}  }}\|^{2}_{L^{2}(\Omega)}\|{ n_{\varepsilon}^{\frac{6\alpha+1}{3} }}\|^{{\frac{2}{3\alpha }}}_{L^{\frac{3}{6\alpha+1}}(\Omega)}+
\|{ n_{\varepsilon}^{\frac{6\alpha+1}{3} }}\|^{{\frac{12\alpha+4}{6\alpha +1}}}_{L^{\frac{3}{6\alpha+1}}(\Omega)}\right)}\\
\leq&\disp{C_{2}(T+1)~~\mbox{for all}~~ T > 0.}\\
\end{array}
\label{ddffbnmbnddfgcz2ddfvgbhh.htt678ddfghhhyuiihjj}
\end{equation}
Case $0<\alpha\leq\frac{1}{3}:$
Therefore, employing  the
H\"{o}lder inequality (with two exponents $\frac{2}{3\alpha+1}$ and $\frac{2}{1-3\alpha}$), we conclude  that there exists a positive
 constant $C_3$ 
such that
\begin{equation}
\begin{array}{rl}
\disp\int_{0}^T\disp\int_{\Omega}|\nabla n_{\varepsilon}|^{{3\alpha+1}}
\leq&\disp{\left[\int_{0}^T\disp\int_{\Omega} n_{\varepsilon}^\frac{12\alpha-4}{3}|\nabla n_{\varepsilon}|^2\right]^{\frac{3\alpha+1}{2}}
\left[\int_{0}^T\disp\int_{\Omega} n_{\varepsilon}^{\frac{12\alpha+4}{3}}\right]^{\frac{1-3\alpha}{2}} }\\
\leq&\disp{C_{3}(T+1)~~\mbox{for all}~~ T > 0.}\\
\end{array}
\label{5555ddffbnmbncz2ddfvgffgtyybhh.htt678ghhjjjddfghhhyuiihjj}
\end{equation}

Case $\alpha\geq\frac{1}{3}:$
Multiply the first equation in $\dref{1.1fghyuisda}$ by $ n_{\varepsilon}$
 and  using $\nabla\cdot u_\varepsilon=0$, we derive
\begin{equation}
\begin{array}{rl}
&\disp{\frac{1}{{2}}\frac{d}{dt}\|{ n_{\varepsilon} }\|^{{{{2}}}}_{L^{{{2}}}(\Omega)}+
\int_{\Omega}  |\nabla n_{\varepsilon}|^2}\\
=&\disp{-
\int_{\Omega}  n_{\varepsilon}\nabla\cdot(n_{\varepsilon}\frac{1}{(1+\varepsilon n_{\varepsilon})}S_\varepsilon(x, n_{\varepsilon}, c_{\varepsilon})\cdot\nabla c_{\varepsilon})-\int_{\Omega}n_{\varepsilon}^{2}m_{\varepsilon}}\\
\leq&\disp{
\int_{\Omega}  n_{\varepsilon}\frac{1}{(1+\varepsilon n_{\varepsilon})}|S_\varepsilon(x, n_{\varepsilon}, c_{\varepsilon})||\nabla n_{\varepsilon}||\nabla c_{\varepsilon}|~~\mbox{for all}~~ t\in(0, T_{max,\varepsilon}).}
\end{array}
\label{55hhjjcffgjjjkkkkhhhjkklddfgggffgglffghhhz2.5}
\end{equation}
Now, invoke the Gagliardo-Nirenberg inequality again to obtain $C_4, C_5$ and $C_6>0$ such
that
\begin{equation}
\begin{array}{rl}
 \disp\int_{\Omega}  n_{\varepsilon}^{4-4\alpha}=&\disp{ \|n_{\varepsilon}\|^{4-4\alpha}_{L^{4-4\alpha}(\Omega)}}\\
 \leq&\disp{C_4 \|\nabla n_{\varepsilon}\|^{\frac{2(9-12\alpha)}{5}}_{L^2(\Omega)}\| n_{\varepsilon}\|^{4-4\alpha-\frac{2(9-12\alpha)}{5}}_{L^{1}(\Omega)}+C_1 \|n_{\varepsilon}\|^{4-4\alpha}_{L^1(\Omega)}}\\
  \leq&\disp{C_5( \|\nabla n_{\varepsilon}\|^{\frac{2(9-12\alpha)}{5}}_{L^2(\Omega)}+1)}\\
 \leq&\disp{C_6( \|\nabla n_{\varepsilon}\|^{2}_{L^2(\Omega)}+1)}\\
\end{array}
\label{55hherrrdddrjjssssssdddssdddcz2.5}
\end{equation}
by using \dref{ddfgczhhhh2.5ghju48cfg924ghyuji} and the  Young inequality.

Recalling \dref{x1.73142vghf48gg} and using $\alpha\geq\frac{1}{3}$, from Young inequality again, we derive that
\begin{equation}
\begin{array}{rl}
&\disp\int_{\Omega} n_{\varepsilon}\frac{1}{(1+\varepsilon n_{\varepsilon})}|S_\varepsilon(x, n_{\varepsilon}, c_{\varepsilon})||\nabla n_{\varepsilon}||\nabla c_{\varepsilon}|\\
\leq&\disp{C_S\int_{\Omega}n_{\varepsilon}^{1-\alpha} |\nabla n_{\varepsilon}||\nabla c_{\varepsilon}|}\\
\leq&\disp{\frac{1}{2}\int_{\Omega} |\nabla n_{\varepsilon}|^2+\frac{C_S^2}{2}\int_{\Omega}n_{\varepsilon}^{2(1-\alpha)}
|\nabla c_{\varepsilon}|^2}\\
\leq&\disp{\frac{1}{2}\int_{\Omega} |\nabla n_{\varepsilon}|^2+\frac{1}{4C_6}
\int_{\Omega}n_{\varepsilon}^{4(1-\alpha)}+\frac{C_6C_S^4}{4} |\nabla c_{\varepsilon}|^4~~\mbox{for all}~~ t\in(0, T_{max,\varepsilon}),}
\end{array}
\label{55hhjjcffghhhjkkllfffghggggghgghjjjhfghhhz2.5}
\end{equation}
which combined with \dref{55hhjjcffgjjjkkkkhhhjkklddfgggffgglffghhhz2.5} and \dref{55hherrrdddrjjssssssdddssdddcz2.5} implies that
\begin{equation}
\begin{array}{rl}
&\disp{\frac{1}{{2}}\frac{d}{dt}\|{ n_{\varepsilon} }\|^{{{{2}}}}_{L^{{{2}}}(\Omega)}+
\frac{3}{4}\int_{\Omega}  |\nabla n_{\varepsilon}|^2\leq\frac{C_6C_S^4}{4} |\nabla c_{\varepsilon}|^4~~\mbox{for all}~~ t\in(0, T_{max,\varepsilon}),}
\end{array}
\label{55hhjjcffghhhjkklddfgggffgglffghhhz2.5}
\end{equation}
so that, collecting \dref{bnmbncz2.5ghhjuyuivvbnnihjj} and \dref{55hhjjcffghhhjkklddfgggffgglffghhhz2.5}
 \begin{equation}
\begin{array}{rl}
&\disp{\int_{\Omega} n_{\varepsilon}^{2}\leq C_{7}~~~\mbox{for all}~~ t\in (0, T_{max,\varepsilon})}\\
\end{array}
\label{czfvgb2.5ghhjuyucfkllllfhhhhhhhjjggcvviihjj}
\end{equation}
and
\begin{equation}
\begin{array}{rl}
&\disp{\int_{0}^{ T}\int_{\Omega}  |\nabla {n_{\varepsilon}}|^2\leq C_{7}(T+1).}\\
\end{array}
\label{bnmbncz2.5ghhjugghjllllljdfghhjjyuivvbnklllnihjj}
\end{equation}
\end{proof}

In order to prove
the limit functions $n$ and $u$ gained below (see Section 6), we will rely on an
additional regularity estimate for $n_\varepsilon u_\varepsilon$.
\begin{lemma}\label{111lemmddaghjsffggggsddgghhmk4563025xxhjklojjkkk}
Let $\alpha>0$. Then there exists  
$C > 0$   independent of $\varepsilon$ such that,
for each $T\in(0, T_{max,\varepsilon})$, the solution of \dref{1.1fghyuisda} satisfies
\begin{equation}
\begin{array}{rl}
&\disp{\int_{0}^T\int_{\Omega}|n_{\varepsilon}u_\varepsilon|^{\frac{2+6\alpha}{2+3\alpha}}\leq C(T+1).}\\
\end{array}
\label{111bnmbncz2.ffghh5ghhjuyuivvbnnihjj}
\end{equation}
\end{lemma}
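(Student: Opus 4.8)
The plan is to interpolate the $L^{\frac{2+6\alpha}{2+3\alpha}}$-norm of $n_\varepsilon u_\varepsilon$ by using H\"older's inequality to split it into a factor controlled by the spatial bounds on $u_\varepsilon$ in $L^2(\Omega)$ and $H^1(\Omega)$ (hence $L^6(\Omega)$ in the 3D case) from Lemma \ref{lemmaghjffggssddgghhmk4563025xxhjklojjkkk}, together with a factor controlled by the space-time bound on $n_\varepsilon$ in $L^{\frac{12\alpha+4}{3}}(\Omega\times(0,T))$ from Lemma \ref{lemmddaghjsffggggsddgghhmk4563025xxhjklojjkkk}. First I would fix $q=\frac{2+6\alpha}{2+3\alpha}\in(1,2)$ and apply the pointwise-in-time H\"older inequality
\begin{equation}
\int_{\Omega}|n_\varepsilon u_\varepsilon|^{q}\le \left(\int_{\Omega} n_\varepsilon^{q r}\right)^{1/r}\left(\int_{\Omega}|u_\varepsilon|^{q r'}\right)^{1/r'},
\label{plan-holder}
\end{equation}
with the exponents chosen so that $qr'=6$, i.e. $r'=\frac{6}{q}$ and hence $r=\frac{6}{6-q}$, giving $qr=\frac{6q}{6-q}$. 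A short computation with $q=\frac{2+6\alpha}{2+3\alpha}$ shows $\frac{6q}{6-q}=\frac{12\alpha+4}{3}$, which is exactly the exponent appearing in \dref{bnmbncz2.ffghh5ghhjuyuivvbnnihjj}; this is the reason for the particular value of $q$ in the statement.

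Next I would bound the $u_\varepsilon$-factor: since $\|\nabla u_\varepsilon(\cdot,t)\|_{L^2(\Omega)}$ is square-integrable in time on $(0,T)$ and $\|u_\varepsilon(\cdot,t)\|_{L^2(\Omega)}$ is bounded uniformly in $t$ (both from \dref{czfvgb2.5ghhjuyuccvviihjj} and \dref{bnmbncz2.5ghhjuyuivvbnnihjj}), the Sobolev embedding $W^{1,2}(\Omega)\hookrightarrow L^6(\Omega)$ yields $\|u_\varepsilon(\cdot,t)\|_{L^6(\Omega)}\le C(1+\|\nabla u_\varepsilon(\cdot,t)\|_{L^2(\Omega)})$, so $\left(\int_\Omega|u_\varepsilon|^6\right)^{q/(6r')}=\|u_\varepsilon\|_{L^6(\Omega)}^{q/r'}$ with $q/r'=\frac{q^2}{6}<2$ is integrable in time. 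Then I would integrate \eqref{plan-holder} over $(0,T)$ and apply H\"older's inequality in time with exponents making the $n_\varepsilon$-factor land exactly on the space-time $L^{\frac{12\alpha+4}{3}}$ bound; concretely, writing $\left(\int_\Omega n_\varepsilon^{\frac{12\alpha+4}{3}}\right)^{1/r}$ and using that $\frac1r\cdot(\text{the time exponent})=\frac{3}{12\alpha+4}\cdot(\text{something})$, one checks the bookkeeping closes because $\frac{12\alpha+4}{3}$ is precisely $qr$ and the complementary time-exponent on the $u_\varepsilon$-factor is $\le 2$, which is covered by the time-integrability of $\|\nabla u_\varepsilon\|_{L^2}^2$.

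The only genuinely delicate point is verifying that the pair of exponents is simultaneously admissible, i.e. that after splitting in space and then in time, the power of $\|u_\varepsilon(\cdot,t)\|_{L^6(\Omega)}$ does not exceed $2$ (so it is integrated against the $L^1_t$ datum coming from \dref{bnmbncz2.5ghhjuyuivvbnnihjj}) while the power of $n_\varepsilon$ in space integrates to exactly $\frac{12\alpha+4}{3}$. Since $q<2$ forces $\frac{q^2}{6}<\frac{4}{6}<2$, this is automatic, so there is in fact no real obstacle once the arithmetic of the exponents is pinned down; the proof is a one-paragraph double H\"older argument invoking \dref{czfvgb2.5ghhjuyuccvviihjj}, \dref{bnmbncz2.5ghhjuyuivvbnnihjj} and \dref{bnmbncz2.ffghh5ghhjuyuivvbnnihjj}. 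I would present it by displaying \eqref{plan-holder}, recording the identity $\frac{6q}{6-q}=\frac{12\alpha+4}{3}$ for $q=\frac{2+6\alpha}{2+3\alpha}$, inserting the Sobolev bound on $\|u_\varepsilon\|_{L^6}$, and then integrating in time with the complementary H\"older exponent to absorb everything into $C(T+1)$.
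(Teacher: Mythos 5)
Your overall strategy (spatial H\"older with $u_\varepsilon\in L^6(\Omega)$, then a splitting in time) starts the same way as the paper, but the exponent bookkeeping contains two errors that together break the argument. First, with $q=\frac{2+6\alpha}{2+3\alpha}$ and $qr'=6$ one gets $qr=\frac{6q}{6-q}=\frac{3(2+6\alpha)}{5+6\alpha}$, not $\frac{12\alpha+4}{3}$ (test $\alpha=1$: $\frac{24}{11}\neq\frac{16}{3}$); this is harmless in space, since $\frac{3(2+6\alpha)}{5+6\alpha}<\frac{12\alpha+4}{3}$ and $\Omega$ is bounded, but the identity you cite as ``the reason for the particular value of $q$'' is false. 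Second, and fatally, the $u$-factor produced by the spatial H\"older step is $\bigl(\int_\Omega|u_\varepsilon|^{qr'}\bigr)^{1/r'}=\|u_\varepsilon\|_{L^6(\Omega)}^{q}$, i.e.\ the power is $q$, not $q/r'=q^2/6$. With the correct power $q$, if you apply H\"older in time so that the $n_\varepsilon$-factor lands on $\int_0^T\int_\Omega n_\varepsilon^{\frac{12\alpha+4}{3}}$ from \dref{bnmbncz2.ffghh5ghhjuyuivvbnnihjj}, the conjugate exponent forces $\|u_\varepsilon\|_{L^6(\Omega)}\le C\|\nabla u_\varepsilon\|_{L^2(\Omega)}$ to appear with time-power $\frac{4(1+3\alpha)}{1+6\alpha}>2$ for every $\alpha>0$, which is not covered by \dref{bnmbncz2.5ghhjuyuivvbnnihjj}. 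Equivalently, keeping the $u$-power at $2$ requires $\int_0^T\|n_\varepsilon\|^{2+6\alpha}_{L^{s}(\Omega)}$ with $s=\frac{3(2+6\alpha)}{5+6\alpha}$, and the time-power $2+6\alpha$ is strictly larger than the $\frac{12\alpha+4}{3}$ that \dref{bnmbncz2.ffghh5ghhjuyuivvbnnihjj} supplies; interpolating against the uniform $L^{4\alpha+\frac23}$ bound of \dref{czfvgb2.5ghhjuyuccvviihjj} only repairs this when $\alpha\ge\frac1{12}$, so the gap is genuine for small $\alpha$.

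The paper closes the estimate differently at exactly this point: after the same spatial H\"older step and $\|u_\varepsilon\|_{L^6(\Omega)}\le C\|\nabla u_\varepsilon\|_{L^2(\Omega)}$, it uses Young's inequality to split $\|n_\varepsilon\|^q_{L^s(\Omega)}\|\nabla u_\varepsilon\|^q_{L^2(\Omega)}\le C\|n_\varepsilon\|^{2+6\alpha}_{L^s(\Omega)}+C\|\nabla u_\varepsilon\|^2_{L^2(\Omega)}$, absorbs the second summand into \dref{bnmbncz2.5ghhjuyuivvbnnihjj}, and controls $\int_0^T\|n_\varepsilon\|^{2+6\alpha}_{L^s(\Omega)}$ not by \dref{bnmbncz2.ffghh5ghhjuyuivvbnnihjj} but by a Gagliardo--Nirenberg interpolation of $n_\varepsilon^{\frac{6\alpha+1}{3}}$ between the space-time gradient bound $\int_0^T\|\nabla n_\varepsilon^{\frac{6\alpha+1}{3}}\|^2_{L^2(\Omega)}\le C(T+1)$ (this is \dref{bnmbncz2.5ghhjuyuivvbnnihjj} with $\frac p2=\frac{6\alpha+1}{3}$) and the mass bound \dref{ddfgczhhhh2.5ghju48cfg924ghyuji}. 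You would need to replace your second H\"older-in-time step by this Young-plus-Gagliardo--Nirenberg argument for the proof to work for all $\alpha>0$.
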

\begin{proof}
In  view of the H\"{o}lder inequality and the Young inequality, we have
$$
\begin{array}{rl}
&\disp\int_{0}^T\int_{\Omega}|n_{\varepsilon}u_\varepsilon|^{\frac{2+6\alpha}{2+3\alpha}} \\
\leq&\disp{\int_{0}^T\|n_{\varepsilon}\|^{\frac{2+6\alpha}{2+3\alpha}}_{L^\frac{(2+6\alpha)\theta}{(2+3\alpha)(\theta-1)}(\Omega)}
\|u_\varepsilon\|^{\frac{2+6\alpha}{2+3\alpha}}_{L^6(\Omega)}}\\
\leq&\disp{C_1\int_{0}^T\|n_{\varepsilon}\|^{\frac{2+6\alpha}{2+3\alpha}}_{L^\frac{(2+6\alpha)\theta}{(2+3\alpha)(\theta-1)}(\Omega)}
\|\nabla u_\varepsilon\|^{\frac{2+6\alpha}{2+3\alpha}}_{L^2(\Omega)}}\\
\leq&\disp{C_2\int_{0}^T\|n_{\varepsilon}\|^{2+6\alpha}_{L^\frac{(2+6\alpha)\theta}{(2+3\alpha)(\theta-1)}(\Omega)}
+C_2\int_{0}^T\|\nabla u_\varepsilon\|^{2}_{L^2(\Omega)},~\mbox{for all}~ T > 0,}\\
\end{array}
$$
where $\theta=\frac{3(2+3\alpha)}{(1+3\alpha)}.$
Next, by \dref{ddfgczhhhh2.5ghju48cfg924ghyuji}, we derive that
$$
\begin{array}{rl}
&\disp C_2\int_{0}^T\|n_{\varepsilon}\|^{2+6\alpha}_{L^\frac{(2+6\alpha)\theta}{(2+3\alpha)(\theta-1)}(\Omega)} \\
=&\disp{C_2\int_{0}^T\|n_{\varepsilon}\|^{2+6\alpha}_{L^\frac{3(2+6\alpha)}{6\alpha+5}(\Omega)}}\\
=&\disp{C_2\int_{0}^T\|n_{\varepsilon}^{\frac{6\alpha+1}{3} }\|^{\frac{3(6\alpha+2)}{6\alpha+1}}_{L^\frac{9(2+6\alpha)}{(6\alpha+5)(6\alpha+1)}(\Omega)}}\\
\leq&\disp{C_3\int_{0}^T\left(\|\nabla n_{\varepsilon}^{\frac{6\alpha+1}{3}}\|^{2}_{L^2(\Omega)}\|
 n_{\varepsilon}^{\frac{6\alpha+1}{3}}\|^{\frac{3(6\alpha+2)}{6\alpha+1}-2}_{L^{\frac{3}{6\alpha+1}}(\Omega)}
+\|
 n_{\varepsilon}^{\frac{6\alpha+1}{3}}\|^{\frac{3(6\alpha+2)}{6\alpha+1}}_{L^{\frac{3}{6\alpha+1}}(\Omega)}\right)}\\
\leq&\disp{C_4(T+1),~\mbox{for all}~ T > 0}\\
\end{array}
$$
by using \dref{ddfgczhhhh2.5ghju48cfg924ghyuji}.
\end{proof}

\section{Regularity properties of time derivatives}

To prepare our subsequent compactness properties of
$(n_\varepsilon, c_\varepsilon,m_\varepsilon, u_\varepsilon)$ by means of the Aubin-Lions lemma (see Simon \cite{Simon}), we use Lemmas \ref{fvfgfflemma45}-\ref{lemmddaghjsffggggsddgghhmk4563025xxhjklojjkkk} to obtain
the following regularity property with respect to the time variable.
\begin{lemma}\label{qqqqlemma45630hhuujjuuyytt}
Let $\alpha>0$,
\dref{dd1.1fghyuisdakkkllljjjkk} and \dref{ccvvx1.731426677gg}
 hold.
 Then for any $T>0, $
  one can find $C > 0$ independent if $\varepsilon$ such that 
\begin{equation}
 \begin{array}{ll}
\disp\int_0^T\|\partial_tn_\varepsilon(\cdot,t)\|_{({W^{1,\frac{2\alpha+6}{3\alpha}}(\Omega)})^*}^{\frac{2+6\alpha}{2+3\alpha}}dt  \leq C(T+1),\\
   \end{array}\label{1.1ddfgeddvbnmkffgghlllhyuisda}
\end{equation}
\begin{equation}
 \begin{array}{ll}
  \disp\int_0^T\|\partial_tc_\varepsilon(\cdot,t)\|_{(W^{1,4}(\Omega))^*}^{\frac{4}{3}}dt  \leq C(T+1)\\
   \end{array}\label{wwwwwqqqq1.1dddfgbhnjmdfgeddvbnmklllhyussddisda}
\end{equation}
as well as
\begin{equation}
 \begin{array}{ll}
  \disp\int_0^T\|\partial_tm_\varepsilon(\cdot,t)\|_{(W^{1,4}(\Omega))^*}^{\frac{4}{3}}dt  \leq C(T+1)\\
   \end{array}\label{wwwwwqqqq1.1dddllllfgbhnjmdfgeddvbnmklllhyussddisda}
\end{equation}
and
\begin{equation}
 \begin{array}{ll}
  \disp\int_0^T\|\partial_tu_\varepsilon(\cdot,t)\|_{(W^{1,4}_{0,\sigma}(\Omega))^*}^{\frac{4}{3}}dt  \leq C(T+1).\\
   \end{array}\label{wwwwwqqqq1.1dddfgkkllbhddffgggnjmdfgeddvbnmklllhyussddisda}
\end{equation}
\end{lemma}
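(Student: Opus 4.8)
The plan is to test each evolution equation in \dref{1.1fghyuisda} against an arbitrary $\varphi$ in the relevant Sobolev space, move derivatives onto $\varphi$ by integration by parts so that only the components (and their first-order spatial derivatives) of the solution appear, and then estimate the resulting integrals by H\"older's inequality using exactly the $\varepsilon$-independent bounds already collected in Lemmas \ref{fvfgfflemma45}, \ref{lemmaghjffggssddgghhmk4563025xxhjklojjkkk}, \ref{lemmddaghjsffggggsddgghhmk4563025xxhjklojjkkk} and \ref{111lemmddaghjsffggggsddgghhmk4563025xxhjklojjkkk}. In each case the duality exponent on the right-hand side is chosen precisely so that every factor appearing has a known space-time integrability, and then one raises to the indicated power and integrates in time.

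For \dref{1.1ddfgeddvbnmkffgghlllhyuisda}: writing $\int_\Omega \partial_t n_\varepsilon \varphi = -\int_\Omega \nabla n_\varepsilon\cdot\nabla\varphi + \int_\Omega n_\varepsilon\frac{1}{1+\varepsilon n_\varepsilon}S_\varepsilon\nabla c_\varepsilon\cdot\nabla\varphi + \int_\Omega n_\varepsilon u_\varepsilon\cdot\nabla\varphi - \int_\Omega n_\varepsilon m_\varepsilon\varphi$, I would bound the four terms using $\|\nabla n_\varepsilon\|_{L^{\gamma_0}}$ from \dref{bnmbncz2.ffghh5ghhjuyuivvbnnihjj}, the sensitivity bound \dref{x1.73142vghf48gg} together with $\|n_\varepsilon\|_{L^{(12\alpha+4)/3}}$ and $\|\nabla c_\varepsilon\|_{L^4}$, the mixed term $\|n_\varepsilon u_\varepsilon\|_{L^{(2+6\alpha)/(2+3\alpha)}}$ from \dref{111bnmbncz2.ffghh5ghhjuyuivvbnnihjj}, and the pointwise bounds on $m_\varepsilon$, $\int_\Omega n_\varepsilon$; the Sobolev exponent $\frac{2\alpha+6}{3\alpha}$ is dictated by the weakest of these, namely the convective term, so that $\nabla\varphi\in L^{(2+6\alpha)/(3\alpha)}$ pairs with $n_\varepsilon u_\varepsilon\in L^{(2+6\alpha)/(2+3\alpha)}$, and the exponent $\frac{2+6\alpha}{2+3\alpha}$ in time is then the natural one. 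For \dref{wwwwwqqqq1.1dddfgbhnjmdfgeddvbnmklllhyussddisda} and \dref{wwwwwqqqq1.1dddllllfgbhnjmdfgeddvbnmklllhyussddisda}, testing the $c_\varepsilon$- and $m_\varepsilon$-equations against $\varphi\in W^{1,4}(\Omega)$ and integrating by parts, the terms $\int\nabla c_\varepsilon\cdot\nabla\varphi$, $\int c_\varepsilon u_\varepsilon\cdot\nabla\varphi$, $\int(c_\varepsilon+m_\varepsilon)\varphi$ (resp.\ $\int n_\varepsilon m_\varepsilon\varphi$) are all controlled by $\|\nabla c_\varepsilon\|_{L^4}$, $\|u_\varepsilon\|_{L^4}$ (via $\|\nabla u_\varepsilon\|_{L^2}$ and Sobolev embedding in 3D), the $L^\infty$-bounds on $c_\varepsilon,m_\varepsilon$, and $\|n_\varepsilon\|_{L^{4/3}}\le\|n_\varepsilon\|_{L^{(12\alpha+4)/3}}^{\text{(power)}}$; raising to power $\frac43$ and integrating gives $C(T+1)$, using in particular $\int_0^T\|\nabla c_\varepsilon\|_{L^4}^4\le C(T+1)$ and $\int_0^T\|\nabla u_\varepsilon\|_{L^2}^2\le C(T+1)$ from \dref{bnmbncz2.5ghhjuyuivvbnnihjj}.

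For \dref{wwwwwqqqq1.1dddfgkkllbhddffgggnjmdfgeddvbnmklllhyussddisda}, I would test the Stokes equation against $\varphi\in W^{1,4}_{0,\sigma}(\Omega)$: $\int_\Omega\partial_t u_\varepsilon\cdot\varphi = -\int_\Omega\nabla u_\varepsilon\cdot\nabla\varphi + \kappa\int_\Omega (Y_\varepsilon u_\varepsilon\otimes u_\varepsilon):\nabla\varphi + \int_\Omega (n_\varepsilon+m_\varepsilon)\nabla\phi\cdot\varphi$, where the pressure drops out by $\nabla\cdot\varphi=0$. The diffusion term is bounded by $\|\nabla u_\varepsilon\|_{L^2}\|\nabla\varphi\|_{L^2}$; the forcing term by $(\|n_\varepsilon\|_{L^{6/5}}+\|m_\varepsilon\|_{L^{6/5}})\|\nabla\phi\|_{L^\infty}\|\varphi\|_{L^6}\lesssim(\|n_\varepsilon\|_{L^{6/5}}+1)\|\varphi\|_{W^{1,4}}$; the main point is the nonlinear convection term, which I would estimate via $\|Y_\varepsilon u_\varepsilon\|_{L^4}\|u_\varepsilon\|_{L^4}\|\nabla\varphi\|_{L^2}\lesssim\|u_\varepsilon\|_{L^4}^2\|\nabla\varphi\|_{L^2}$ using boundedness of $Y_\varepsilon$ on $L^4$, then $\|u_\varepsilon\|_{L^4}^2\lesssim\|u_\varepsilon\|_{L^2}^{1/2}\|\nabla u_\varepsilon\|_{L^2}^{3/2}+\|u_\varepsilon\|_{L^2}^2$ by Gagliardo--Nirenberg; since $\|u_\varepsilon\|_{L^2}$ is bounded by \dref{czfvgb2.5ghhjuyuccvviihjj}, the worst contribution is $\|\nabla u_\varepsilon\|_{L^2}^{3/2}$, whose $\frac43$-th power is $\|\nabla u_\varepsilon\|_{L^2}^2$, integrable over $(0,T)$ by \dref{bnmbncz2.5ghhjuyuivvbnnihjj}. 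Collecting, $\|\partial_t u_\varepsilon\|_{(W^{1,4}_{0,\sigma})^*}^{4/3}\lesssim \|\nabla u_\varepsilon\|_{L^2}^{2}+\|n_\varepsilon\|_{L^{6/5}}^{4/3}+1$, which integrates to $C(T+1)$.

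The main obstacle, and the reason the four estimates are grouped into one lemma, is bookkeeping the admissible exponents: one must verify in each case that the H\"older triple actually closes with the space-time norms already available --- in particular for \dref{1.1ddfgeddvbnmkffgghlllhyuisda} the choice $W^{1,(2\alpha+6)/(3\alpha)}$ is forced by the convective term $n_\varepsilon u_\varepsilon$ and one has to check the diffusive and chemotactic contributions are no worse, i.e.\ that $\gamma_0=\min\{3\alpha+1,2\}\ge(2+6\alpha)/(3\alpha)$'s conjugate and that $\frac{12\alpha+4}{3}$ dominates the exponent needed to pair $n_\varepsilon(1+\varepsilon n_\varepsilon)^{-1}|S_\varepsilon|\nabla c_\varepsilon$ against $\nabla\varphi$. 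Once the exponents are confirmed, each estimate is a routine application of H\"older and Young; no new differential inequality is needed beyond what Lemmas \ref{lemmaghjffggssddgghhmk4563025xxhjklojjkkk}--\ref{111lemmddaghjsffggggsddgghhmk4563025xxhjklojjkkk} already provide.
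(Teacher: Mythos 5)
Your proposal is correct and follows essentially the same route as the paper: test each equation of \dref{1.1fghyuisda} against a smooth (solenoidal, for the fluid part) test function, integrate by parts, and close the resulting H\"older estimates with the $\varepsilon$-independent space--time bounds of Lemmas \ref{fvfgfflemma45}--\ref{111lemmddaghjsffggggsddgghhmk4563025xxhjklojjkkk}, the exponent $\frac{2+6\alpha}{2+3\alpha}$ being dictated by the convective term $n_\varepsilon u_\varepsilon$ and checked against the diffusive and chemotactic contributions exactly as in the paper's inequality $1<\frac{2+6\alpha}{2+3\alpha}<\min\{3\alpha+1,2\}$, $\frac{2+6\alpha}{2+3\alpha}<\frac{12\alpha+4}{3\alpha+4}$. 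The paper only writes out the $n_\varepsilon$-estimate and refers to the literature for the other three; your explicit treatment of the Navier--Stokes convection term via the uniform $L^4$-boundedness of $Y_\varepsilon$ and Gagliardo--Nirenberg is consistent with what that omitted argument requires.
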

\begin{proof} 
Firstly, an elementary calculation ensures that
 \begin{equation}1<\frac{2+6\alpha}{2+3\alpha}<\min\{3\alpha+1,2\}~~~\mbox{and}~~~\frac{2+6\alpha}{2+3\alpha}<\frac{12\alpha+4}{3\alpha+4}
 \label{gbhncvbmdcfvgcz2.5ghju4ddfghh8}
\end{equation}
by using $\alpha>0.$
Next,
testing the first equation of \dref{1.1fghyuisda}
 by certain   $\varphi\in C^{\infty}(\bar{\Omega})$, we have
 \begin{equation}
\begin{array}{rl}
&\disp\left|\int_{\Omega}(n_{\varepsilon,t})\varphi\right|\\
 =&\disp{\left|\int_{\Omega}\left[\Delta n_{\varepsilon}-\nabla\cdot(n_{\varepsilon}\frac{1}{(1+\varepsilon n_{\varepsilon})}S_\varepsilon(x, n_{\varepsilon}, c_{\varepsilon})\nabla c_{\varepsilon})-u_{\varepsilon}\cdot\nabla n_{\varepsilon}-n_{\varepsilon}m_{\varepsilon}\right]\varphi\right|}
\\
=&\disp{\left|\int_\Omega \left[-\nabla n_{\varepsilon}\cdot\nabla\varphi+n_{\varepsilon}\frac{1}{(1+\varepsilon n_{\varepsilon})}S_\varepsilon(x, n_{\varepsilon}, c_{\varepsilon})\nabla c_{\varepsilon}\cdot\nabla\varphi+ n_{\varepsilon}u_{\varepsilon}\cdot\nabla  \varphi- n_{\varepsilon}m_{\varepsilon}  \varphi\right]\right|}\\
\leq&\disp{\left\{\|\nabla n_{\varepsilon}\|_{L^{\frac{2+6\alpha}{2+3\alpha}}(\Omega)}+ \|\frac{ n_{\varepsilon}}{(1+\varepsilon n_{\varepsilon})}S_\varepsilon(x, n_{\varepsilon}, c_{\varepsilon})\nabla c_{\varepsilon}\|_{L^{\frac{2+6\alpha}{2+3\alpha}}(\Omega)}+ \|n_{\varepsilon}u_{\varepsilon}\|_{L^{\frac{2+6\alpha}{2+3\alpha}}(\Omega)}+\|n_{\varepsilon}m_{\varepsilon}\|_{L^{\frac{2+6\alpha}{2+3\alpha}}(\Omega)}
\right\}}\\
&\times\disp{\|\varphi\|_{W^{1,\frac{2\alpha+6}{3\alpha}}(\Omega)}}\\
\end{array}
\label{gbhncvbmdcfvgcz2.5ghju48}
\end{equation}
for all $t>0$.
Along with \dref{bnmbncz2.ffghh5ghhjuyuivvbnnihjj}, \dref{ddfgczhhhh2.5ghju48cfg924ghyuji} and \dref{1.1dddfgbhnjmdfgeddvbnmklllhyussddisda}, further implies that
\begin{equation}
\begin{array}{rl}
&\disp\int_0^T\|\partial_tn_\varepsilon(\cdot,t)\|_{({W^{1,\frac{2\alpha+6}{3\alpha}}(\Omega)})^*}^{\frac{2+6\alpha}{2+3\alpha}}dt \\
\leq&\disp{\int_0^T\left\{\|\nabla n_{\varepsilon}\|_{L^{\frac{2+6\alpha}{2+3\alpha}}(\Omega)}+ \|n_{\varepsilon}\frac{1}{(1+\varepsilon n_{\varepsilon})}S_\varepsilon(x, n_{\varepsilon}, c_{\varepsilon})\nabla c_{\varepsilon}\|_{L^{\frac{2+6\alpha}{2+3\alpha}}(\Omega)}+ \|n_{\varepsilon}u_{\varepsilon}\|_{L^{\frac{2+6\alpha}{2+3\alpha}}(\Omega)}
\right\}}^{\frac{2+6\alpha}{2+3\alpha}}dt
\\
\leq&\disp{C_1\int_0^T\left\{\|\nabla n_{\varepsilon}\|_{L^{\frac{2+6\alpha}{2+3\alpha}}(\Omega)}^{\frac{2+6\alpha}{2+3\alpha}}+ \|n_{\varepsilon}\frac{1}{(1+\varepsilon n_{\varepsilon})}S_\varepsilon(x, n_{\varepsilon}, c_{\varepsilon})\nabla c_{\varepsilon}\|_{L^{\frac{2+6\alpha}{2+3\alpha}}(\Omega)}^{\frac{2+6\alpha}{2+3\alpha}}\right\}dt}\\
&\disp{+C_1\int_0^T\left\{\|n_{\varepsilon}u_{\varepsilon}\|_{L^{\frac{2+6\alpha}{2+3\alpha}}(\Omega)}^{\frac{2+6\alpha}{2+3\alpha}}+
\|m_{\varepsilon}\|^{\frac{2+6\alpha}{2+3\alpha}}_{L^{\infty}(\Omega)}\|n_{\varepsilon}\|_{L^{\frac{2+6\alpha}{2+3\alpha}}(\Omega)}^{\frac{2+6\alpha}{2+3\alpha}}
\right\}}dt,\\
\end{array}
\label{gbhncvbmdcfvgczffghhh2.5ghju48}
\end{equation}
where $C_1$ is a positive constant independent of $\varepsilon$.
Finally,  \dref{1.1ddfgeddvbnmkffgghlllhyuisda} is a consequence of  \dref{bnmbncz2.ffghh5ghhjuyuivvbnnihjj}, \dref{1.1dddfgbhnjmdfgeddvbnmklllhyussddisda}, \dref{gbhncvbmdcfvgcz2.5ghju4ddfghh8} and the H\"{o}lder ineqaulity.
Multiplying the second equation as well as the third equation and the fourth  equation  in \dref{1.1fghyuisda} by
$\varphi\in C^{\infty}(\bar{\Omega})$, $\varphi\in C^{\infty}(\bar{\Omega})$ and  $\varphi\in C^{\infty}_{0,\sigma} (\Omega;\mathbb{R}^3)$, respectively, we obtain \dref{wwwwwqqqq1.1dddfgbhnjmdfgeddvbnmklllhyussddisda}--\dref{wwwwwqqqq1.1dddfgkkllbhddffgggnjmdfgeddvbnmklllhyussddisda}  in a completed similar manner (see \cite{Winkler51215,Zhengsddfffsdddssddddkkllssssssssdefr23} for details).

\end{proof}

\section{The proof of Theorem  \ref{theorem3}}
In order to prove Theorem  \ref{theorem3}, we first define the weak solution of four-component Keller-Segel-Navier-Stokes system \dref{1.1}.
\begin{definition}\label{df1}
Let $T > 0$ and $(n_0, c_0,m_0, u_0)$ fulfills
\dref{ccvvx1.731426677gg}.
Then a quadruple  of functions $(n, c, m, u)$ is
called a weak solution of \dref{1.1} if the following conditions are satisfied
\begin{equation}
 \left\{\begin{array}{ll}
   n\in L_{loc}^1(\bar{\Omega}\times[0,T)),\\
    c \in L_{loc}^1([0,T); W^{1,1}(\Omega)),\\
    m \in L_{loc}^1([0,T); W^{1,1}(\Omega)),\\
u \in  L_{loc}^1([0,T); W^{1,1}(\Omega);\mathbb{R}^{3}),\\
 \end{array}\right.\label{dffff1.1fghyuisdakkklll}
\end{equation}
where $n\geq 0,c\geq 0$ and $m\geq 0$ in
$\Omega\times(0, T)$ as well as $\nabla\cdot u = 0$ in the distributional sense in
 $\Omega\times(0, T)$,
moreover,
\begin{equation}\label{726291hh}
\begin{array}{rl}
 &u\otimes u \in L^1_{loc}(\bar{\Omega}\times [0, \infty);\mathbb{R}^{3\times 3})~~\mbox{and}~~~ nm~\mbox{belong to}~~ L^1_{loc}(\bar{\Omega}\times [0, \infty)),\\
  &cu,~ ~nu,~~mu ~~\mbox{and}~~nS(x,n,c)\nabla c~ \mbox{belong to}~~
L^1_{loc}(\bar{\Omega}\times [0, \infty);\mathbb{R}^{3})
\end{array}
\end{equation}
and
\begin{equation}
\begin{array}{rl}\label{eqx45xx12112ccgghh}
\disp{-\int_0^{T}\int_{\Omega}n\varphi_t-\int_{\Omega}n_0\varphi(\cdot,0)  }=&\disp{-
\int_0^T\int_{\Omega}\nabla n\cdot\nabla\varphi+\int_0^T\int_{\Omega}n
S(x,n,c)\nabla c\cdot\nabla\varphi}\\
&+\disp{\int_0^T\int_{\Omega}nu\cdot\nabla\varphi-\int_0^T\int_{\Omega}nm\varphi}\\
\end{array}
\end{equation}
for any $\varphi\in C_0^{\infty} (\bar{\Omega}\times[0, T))$ satisfying
 $\frac{\partial\varphi}{\partial\nu}= 0$ on $\partial\Omega\times (0, T)$
  as well as
  \begin{equation}
\begin{array}{rl}\label{eqx45xx12112ccgghhjj}
\disp{-\int_0^{T}\int_{\Omega}c\varphi_t-\int_{\Omega}c_0\varphi(\cdot,0)  }=&\disp{-
\int_0^T\int_{\Omega}\nabla c\cdot\nabla\varphi-\int_0^T\int_{\Omega}c\varphi+\int_0^T\int_{\Omega}m\varphi+
\int_0^T\int_{\Omega}cu\cdot\nabla\varphi}\\
\end{array}
\end{equation}
for any $\varphi\in C_0^{\infty} (\bar{\Omega}\times[0, T))$,
\begin{equation}
\begin{array}{rl}\label{eqx45fffffxx12112ccgghhjj}
\disp{-\int_0^{T}\int_{\Omega}m\varphi_t-\int_{\Omega}m_0\varphi(\cdot,0)  }=&\disp{-
\int_0^T\int_{\Omega}\nabla m\cdot\nabla\varphi-\int_0^T\int_{\Omega}nm\varphi+
\int_0^T\int_{\Omega}mu\cdot\nabla\varphi}\\
\end{array}
\end{equation}
for any $\varphi\in C_0^{\infty} (\bar{\Omega}\times[0, T))$
 and
\begin{equation}
\begin{array}{rl}\label{eqx45xx12112ccgghhjjgghh}
\disp{-\int_0^{T}\int_{\Omega}u\varphi_t-\int_{\Omega}u_0\varphi(\cdot,0) -\kappa
\int_0^T\int_{\Omega} u\otimes u\cdot\nabla\varphi }=&\disp{-
\int_0^T\int_{\Omega}\nabla u\cdot\nabla\varphi-
\int_0^T\int_{\Omega}(n+m)\nabla\phi\cdot\varphi}\\
\end{array}
\end{equation}
for any $\varphi\in C_0^{\infty} (\bar{\Omega}\times[0, T);\mathbb{R}^3)$ fulfilling
$\nabla\varphi\equiv 0$ in
 $\Omega\times(0, T)$.
 If $\Omega\times (0,\infty)\longrightarrow \mathbb{R}^6$ is a weak solution of \dref{1.1} in
 $\Omega\times(0, T)$ for all $T > 0$, then we call
$(n, c, m, u)$ a global weak solution of \dref{1.1}.
\end{definition}
With the help of a priori estimates (see Lemmas \ref{lemmaghjffggssddgghhmk4563025xxhjklojjkkk}--\ref{111lemmddaghjsffggggsddgghhmk4563025xxhjklojjkkk} and \ref{qqqqlemma45630hhuujjuuyytt}), 
by extracting suitable subsequences in a standard
way (see also \cite{Winkler51215}), we could see the solution of \dref{1.1} is indeed globally solvable.


{\bf The proof of Theorem  \ref{theorem3}}
\begin{proof}
Firstly, due to \dref{czfvgb2.5ghhjuyuccvviihjj}
and \dref{bnmbncz2.5ghhjuyuivvbnnihjj}, in light of the Gagliardo--Nirenberg inequality, we derive that there exist positive constants $C_{1}$ and $C_{2}$  such that
\begin{equation}
\begin{array}{rl}
\disp\int_{0}^T\disp\int_{\Omega} |u_{\varepsilon}|^{\frac{10}{3}} \leq&\disp{C_{1}\int_{0}^T\left(\| \nabla{ u_{\varepsilon}}\|^{2}_{L^{2}(\Omega)}\|{ u_{\varepsilon}}\|^{{\frac{4}{3}}}_{L^{2}(\Omega)}+
\|{ u_{\varepsilon}}\|^{{\frac{10}{3}}}_{L^{2}(\Omega)}\right)}\\
\leq&\disp{C_{2}(T+1)~~\mbox{for all}~~ T > 0,}\\
\end{array}
\label{ddffbnmbnddfgffggjjkkuuiicz2dvgbhh.t8ddhhhyuiihjj}
\end{equation}
so that, according to Lemmas \ref{fvfgfflemma45}, \ref{lemmaghjffggssddgghhmk4563025xxhjklojjkkk} and \ref{qqqqlemma45630hhuujjuuyytt},  an application of the Aubin--Lions lemma (see e.g. \cite{Simon})
provides a sequence $(\varepsilon_j)_{j\in \mathbb{N}}\subset (0, 1)$ and limit functions $n,c,m$ and $u$
 such that $\varepsilon_j\searrow 0$ as $j\rightarrow\infty$ and such that
hold as well as
\begin{equation}
n_\varepsilon\rightarrow n ~~\mbox{a.e.}~~\mbox{in}~~\Omega\times(0,\infty)~~\mbox{and in}~~ L_{loc}^{r}(\bar{\Omega}\times[0,\infty))~~\mbox{with}~~r= \left\{\begin{array}{ll}
 {3\alpha+1}~~\mbox{if}~~0<\alpha<\frac{1}{3},\\
  2~~\mbox{if}~~\alpha\geq\frac{1}{3},\\
   \end{array}\right.\label{zjscz2.5297x963ddfgh0ddfggg6662222tt3}
\end{equation}
\begin{equation}
\nabla n_\varepsilon\rightharpoonup \nabla n ~~\mbox{in}~~ L_{loc}^{r}(\bar{\Omega}\times[0,\infty))~~\mbox{with}~~r= \left\{\begin{array}{ll}
 {3\alpha+1}~~\mbox{if}~~0<\alpha<\frac{1}{3},\\
  2~~\mbox{if}~~\alpha\geq\frac{1}{3},\\
   \end{array}\right.\label{zjscz2.5297x963ddfgh0ddgghjjfggg6662222tt3}
\end{equation}
\begin{equation}
c_\varepsilon\rightarrow c ~~\mbox{in}~~ L^{2}_{loc}(\bar{\Omega}\times[0,\infty))~~\mbox{and}~~\mbox{a.e.}~~\mbox{in}~~\Omega\times(0,\infty),
 \label{zjscz2.fgghh5297x963ddfgh0ddfggg6662222tt3}
\end{equation}
\begin{equation}
m_\varepsilon\rightarrow m ~~\mbox{in}~~ L^{2}_{loc}(\bar{\Omega}\times[0,\infty))~~\mbox{and}~~\mbox{a.e.}~~\mbox{in}~~\Omega\times(0,\infty),
 \label{zjscz2.fgghh5297x96ddddd3ddfgh0ddfggg6662222tt3}
\end{equation}
\begin{equation}
\nabla c_\varepsilon\rightharpoonup \nabla c ~~\mbox{in}~~ L^{4}_{loc}(\bar{\Omega}\times[0,\infty)),
 \label{zjscz2.fgghh5297x963ddfgh0dddddfggg6662222tt3}
\end{equation}

\begin{equation}
u_\varepsilon\rightarrow u~~\mbox{in}~~ L_{loc}^2(\bar{\Omega}\times[0,\infty))~~\mbox{and}~~\mbox{a.e.}~~\mbox{in}~~\Omega\times(0,\infty),
 \label{zjscz2.5297x96302222t666t4}
\end{equation}
\begin{equation}
\nabla c_\varepsilon\rightharpoonup \nabla c~~\begin{array}{ll}
 \mbox{in}~~ L_{loc}^{2}(\bar{\Omega}\times[0,\infty)),
   \end{array}\label{1.1ddfgghhhge666ccdf2345ddvbnmklllhyuisda}
\end{equation}
\begin{equation}
\nabla m_\varepsilon\rightharpoonup \nabla m~~\begin{array}{ll}
 \mbox{in}~~ L_{loc}^{2}(\bar{\Omega}\times[0,\infty))
   \end{array}\label{1.1dddddfgghhhge666ccdf2345ddvbnmklllhyuisda}
\end{equation}
as well as
\begin{equation}
 \nabla u_\varepsilon\rightharpoonup \nabla u ~~\mbox{ in}~~L^{2}_{loc}(\bar{\Omega}\times[0,\infty);\mathbb{R}^{3})
 \label{zjscz2.5297x96366602222tt4455}
\end{equation}
and
\begin{equation}
 u_\varepsilon\rightharpoonup u ~~\mbox{ in}~~L^{\frac{10}{3}}_{loc}(\bar{\Omega}\times[0,\infty))
 \label{zjscz2.5ffgtt297x96302266622tt4}
\end{equation}
 with some quadruple  $(n, c,m, u)$.

 In the following, we shall prove $(n,c,m,u)$ is a weak solution of problem \dref{1.1} in Definition \ref{df1}.
To this end,  recalling \dref{ddfgczhhhh2.5ghju48cfg924ghyuji}, \dref{czfvgb2.5ghhjuyuccvviihjj} and \dref{bnmbncz2.5ghhjuyuivvbnnihjj},  we derive 
$(c_{\varepsilon})_{\varepsilon\in(0,1)}$ is bounded in
$L^{2} ((0, T); W^{2,2}(\Omega))$.
Thus,  by virtue of \dref{wwwwwqqqq1.1dddfgbhnjmdfgeddvbnmklllhyussddisda} and the Aubin--Lions lemma we derive that  the relative compactness of $(c_{\varepsilon})_{\varepsilon\in(0,1)}$ in
$L^{2} ((0, T); W^{1,2}(\Omega))$. We can pick an appropriate subsequence which is
still written as $(\varepsilon_j )_{j\in \mathbb{N}}$ such that $\nabla c_{\varepsilon_j} \rightarrow z_1$
 in $L^{2} (\Omega\times(0, T))$ for all $T\in(0, \infty)$ and some
$z_1\in L^{2} (\Omega\times(0, T))$ as $j\rightarrow\infty$, hence $\nabla c_{\varepsilon_j} \rightarrow z_1$ a.e. in $\Omega\times(0, \infty)$
 as $j \rightarrow\infty$.
In view  of \dref{1.1ddfgghhhge666ccdf2345ddvbnmklllhyuisda} and  the Egorov theorem we conclude  that
$z_1=\nabla c,$ and whence
\begin{equation}
\nabla c_\varepsilon\rightarrow \nabla c ~~\mbox{a.e.}~~\mbox{in}~~\Omega\times(0,\infty)
 \label{1.1ddhhyujiiifgghhhge666ccdf2345ddvbnmklllhyuisda}
\end{equation}
 holds.

Next, $\alpha>0$ yields to
$$
r>1,
   $$
   where $r$ is given by \dref{zjscz2.5297x963ddfgh0ddfggg6662222tt3}.
   Therefore,
with the help of  \dref{zjscz2.5297x963ddfgh0ddfggg6662222tt3}--\dref{zjscz2.fgghh5297x963ddfgh0ddfggg6662222tt3}, \dref{zjscz2.5297x96302222t666t4}--\dref{zjscz2.5297x96366602222tt4455}, we can derive  \dref{dffff1.1fghyuisdakkklll}.
Now, by the nonnegativity of $n_\varepsilon$,  $c_\varepsilon$ and $m_\varepsilon$, we derive  $n,c \geq 0$ and $m\geq 0$. Next, due to
\dref{zjscz2.5297x96366602222tt4455} and $\nabla\cdot u_{\varepsilon} = 0$, we conclude that
$\nabla\cdot u = 0$ a.e. in $\Omega\times (0, \infty)$.
Now, by \dref{x1.73142vghf48gg}, \dref{bnmbncz2.ffghh5ghhjuyuivvbnnihjj}
and \dref{3.10gghhjuuloollyuigghhhyy}, we derive that
$$n_\varepsilon \frac{1}{(1+\varepsilon n_{\varepsilon})}S_\varepsilon(x, n_{\varepsilon}, c_{\varepsilon})\leq C_Sn_{\varepsilon}.$$
It is not difficult to verify that
$$\frac{3\alpha+4}{12\alpha+4}=\frac{1}{4}+\frac{3}{12\alpha+4}.$$
From this and
by \dref{bnmbncz2.ffghh5ghhjuyuivvbnnihjj} and \dref{bnmbncz2.5ghhjuyuivvbnnihjj}, and recalling the H\"{o}lder inequality, we can obtain for some positive constant $C_1$ such that
\begin{equation}
 \begin{array}{ll}
  \disp\int_0^T\int_{\Omega}\left[|n_\varepsilon \frac{1}{(1+\varepsilon n_{\varepsilon})}S_\varepsilon(x, n_{\varepsilon}, c_{\varepsilon})\nabla c_\varepsilon|^{\frac{12\alpha+4}{3\alpha+4}} \right]\leq C_1(T+1),\\
   \end{array}\label{1.1dddfgbhnjmdfgeddvbnmklllhyussddisda}
\end{equation}
so that,
 we conclude that
\begin{equation}n_\varepsilon \frac{1}{(1+\varepsilon n_{\varepsilon})}S_\varepsilon(x, n_{\varepsilon}, c_{\varepsilon})\nabla c_\varepsilon\rightharpoonup z_2
~~\mbox{in}~~ L^{\frac{12\alpha+4}{3\alpha+4}}(\Omega\times(0,T);\mathbb{R}^{3})~~\mbox{as}~~\varepsilon = \varepsilon_j\searrow 0~~\mbox{for each}~~ T\in(0,\infty).
\label{1.1ddddfddffttyygghhyujiiifgghhhgffgge6bhhjh66ccdf2345ddvbnmklllhyuisda}
\end{equation}
Next, it follows from \dref{x1.73142vghf48rtgyhu}, \dref{3.10gghhjuuloollyuigghhhyy}, \dref{zjscz2.5297x963ddfgh0ddfggg6662222tt3}, \dref{zjscz2.fgghh5297x963ddfgh0ddfggg6662222tt3} and \dref{1.1ddhhyujiiifgghhhge666ccdf2345ddvbnmklllhyuisda} that
\begin{equation}n_\varepsilon \frac{1}{(1+\varepsilon n_{\varepsilon})}S_\varepsilon(x, n_{\varepsilon}, c_{\varepsilon})\nabla c_\varepsilon\rightarrow nS(x, n, c)\nabla c~~\mbox{a.e.}~~\mbox{in}~~\Omega\times(0,\infty)~~\mbox{as}~~\varepsilon = \varepsilon_j\searrow 0.
\label{1.1ddddfddfftffghhhtyygghhyujiiifgghhhgffgge6bhhjh66ccdf2345ddvbnmklllhyuisda}
\end{equation}
Again by the Egorov theorem, we gain $z_2=nS(x, n, c)\nabla c,$ and hence \dref{1.1ddddfddffttyygghhyujiiifgghhhgffgge6bhhjh66ccdf2345ddvbnmklllhyuisda}
can be rewritten as
\begin{equation}n_\varepsilon \frac{1}{(1+\varepsilon n_{\varepsilon})}S_\varepsilon(x, n_{\varepsilon}, c_{\varepsilon})\nabla c_\varepsilon\rightharpoonup nS(x, n, c)\nabla c
~~\mbox{in}~~ L^{\frac{12\alpha+4}{3\alpha+4}}(\Omega\times(0,T);\mathbb{R}^{3})~~\mbox{as}~~\varepsilon = \varepsilon_j\searrow 0
\label{1.1ddddfddffttyygghhyujiiffghhhifgghhhgffgge6bhhjh66ccdf2345ddvbnmklllhyuisda}
\end{equation}
for each $T\in(0,\infty)$.
This together with ${\frac{12\alpha+4}{3\alpha+4}}>1$ (by $\alpha>0$)
implies  the integrability of $nS(x, n, c)\nabla c$ in \dref{726291hh} as well.
It is not hard to check that
$${\frac{2+6\alpha}{2+3\alpha}}>1~~\mbox{by}~~~\alpha>0.
   $$
   Thereupon, recalling \dref{111bnmbncz2.ffghh5ghhjuyuivvbnnihjj}, we infer that for each $T\in(0, \infty)$
  \begin{equation}
n_\varepsilon u_{\varepsilon}\rightharpoonup z_3 ~~\mbox{in}~~ L^{\frac{2+6\alpha}{2+3\alpha}}(\Omega\times(0,T))
   ~~\mbox{as}~~\varepsilon = \varepsilon_j\searrow 0.
   \label{zjscz2.529ffghhh7x963ddfgh0ddfggg6662222tt3}
\end{equation}
This, together with \dref{zjscz2.5297x963ddfgh0ddfggg6662222tt3} and \dref{zjscz2.5297x96302222t666t4}, implies
 \begin{equation}n_\varepsilon u_\varepsilon\rightarrow nu~~\mbox{a.e.}~~\mbox{in}~~\Omega\times(0,\infty)~~\mbox{as}~~\varepsilon = \varepsilon_j\searrow 0.
\label{1.1ddddfddfftffghhhtyygghhyujiiifgghhhgffgge6bhhffgggjh66ccdf2345ddvbnmklllhyuisda}
\end{equation}
 Along with \dref{zjscz2.529ffghhh7x963ddfgh0ddfggg6662222tt3} and \dref{1.1ddddfddfftffghhhtyygghhyujiiifgghhhgffgge6bhhffgggjh66ccdf2345ddvbnmklllhyuisda}, the Egorov theorem guarantees that $z_3=nu$, whereupon we derive from \dref{zjscz2.529ffghhh7x963ddfgh0ddfggg6662222tt3} that
 \begin{equation}
n_\varepsilon u_{\varepsilon}\rightharpoonup nu ~~\mbox{in}~~ L^{\frac{2+6\alpha}{2+3\alpha}}(\Omega\times(0,T))
   ~~\mbox{as}~~\varepsilon = \varepsilon_j\searrow 0\label{zjscz2.529ffghhh7x963djkkkkdfgh0ddfggg6662222tt3ffff}
\end{equation}
   for each $T\in(0, \infty)$.

 By a similar argument as in the proof of \dref{zjscz2.529ffghhh7x963djkkkkdfgh0ddfggg6662222tt3ffff}, one can derive from \dref{ddfgczhhhh2.5ghju48cfg924ghyuji}, \dref{bnmbncz2.ffghh5ghhjuyuivvbnnihjj} as well as \dref{ddfgczhhhh2.5ghju48cfg924ghyuji} and \dref{zjscz2.5297x963ddfgh0ddfggg6662222tt3} and \dref{zjscz2.fgghh5297x96ddddd3ddfgh0ddfggg6662222tt3} that
  \begin{equation}
n_\varepsilon m_{\varepsilon}\rightharpoonup nm~~\mbox{in}~~ L^{\frac{4}{3}}(\Omega\times(0,T))
   ~~\mbox{as}~~\varepsilon = \varepsilon_j\searrow 0\label{jjjzjscz2.529ffghhh7x963djkkkkdfgh0ddfggg6662222tt3}
\end{equation}
   for each $T\in(0, \infty)$.

As a straightforward consequence of \dref{zjscz2.fgghh5297x963ddfgh0ddfggg6662222tt3}, \dref{zjscz2.fgghh5297x96ddddd3ddfgh0ddfggg6662222tt3} and \dref{zjscz2.5297x96302222t666t4}, it holds that
\begin{equation}
c_\varepsilon u_\varepsilon\rightarrow cu ~~\mbox{ in}~~ L^{1}_{loc}(\bar{\Omega}\times(0,\infty);\mathbb{R}^{3})~~~\mbox{as}~~\varepsilon=\varepsilon_j\searrow0
 \label{zxxcvvfgggjscddfffcvvfggz2.5fff297x96302222tt4}
\end{equation}
and
\begin{equation}
m_\varepsilon u_\varepsilon\rightarrow mu ~~\mbox{ in}~~ L^{1}_{loc}(\bar{\Omega}\times(0,\infty);\mathbb{R}^{3})~~~\mbox{as}~~\varepsilon=\varepsilon_j\searrow0.
 \label{zxxcvvfgggjscddfffcvvfggz2.5297x96302222tt4}
\end{equation}
Thus, the integrability of $nu,nm,mu$ and $cu$ in \dref{726291hh} is verified by \dref{zjscz2.529ffghhh7x963djkkkkdfgh0ddfggg6662222tt3ffff}--\dref{zxxcvvfgggjscddfffcvvfggz2.5297x96302222tt4}.
Now, following an argument from Lemma 4.1 of \cite{Winkler51215} (see also \cite{Zhengsddfffsdddssddddkkllssssssssdefr23}), one could prove
\begin{equation}
\begin{array}{rl}
Y_{\varepsilon}u_{\varepsilon}\otimes u_{\varepsilon}\rightarrow u \otimes u ~~\mbox{in}~~L^1_{loc}(\bar{\Omega}\times[0,\infty);\mathbb{R}^{3\times 3})~~\mbox{as}~~\varepsilon=\varepsilon_j\searrow0.
\end{array}
\label{ggjjssdffzccfccvvfgghjjjvvvvgccvvvgjscz2.5297x963ccvbb111kkuu}
\end{equation}
 Finally, according to \dref{zjscz2.5297x963ddfgh0ddfggg6662222tt3}--\dref{zjscz2.fgghh5297x96ddddd3ddfgh0ddfggg6662222tt3},
 \dref{zjscz2.5297x96302222t666t4}--\dref{zjscz2.5297x96366602222tt4455}, \dref{1.1ddfgghhhge666ccdf2345ddvbnmklllhyuisda},
 \dref{jjjzjscz2.529ffghhh7x963djkkkkdfgh0ddfggg6662222tt3}--\dref{ggjjssdffzccfccvvfgghjjjvvvvgccvvvgjscz2.5297x963ccvbb111kkuu}, we may pass to the limit in
the respective weak formulations associated with the the regularized system \dref{1.1fghyuisda} and get
 the integral
identities \dref{eqx45xx12112ccgghh}--\dref{eqx45xx12112ccgghhjjgghh}.
\end{proof}

{\bf Acknowledgement}:
This work is partially supported by  the National Natural
Science Foundation of China (No. 11601215), Shandong Provincial
Science Foundation for Outstanding Youth (No. ZR2018JL005) and Project funded by China
Postdoctoral Science Foundation (No. 2019M650927).

\end{document}